\newcommand{\sk}{{\ensuremath{\sf k }}}
\DeclareMathOperator{\Tor}{Tor}
\newtheorem{conjecture}{Conjecture}[section]
\newtheorem{theorem}[conjecture]{Theorem}
\newtheorem{lemma}[conjecture]{Lemma}
\newtheorem{proposition}[conjecture]{Proposition}
\theoremstyle{definition}
\newtheorem{remark}[conjecture]{Remark}
\newtheorem{question}{Question}
\newtheorem{definition}[conjecture]{Definition}
\newtheorem{notation}[conjecture]{Notation}
\newtheorem{example}[conjecture]{Example}
\providecommand\ass{\text{\rm Ass}}
\renewcommand\dim{\text{\rm dim}}
\providecommand\max{\text{\rm max}}
\providecommand\Tor{{\rm Tor}}
\providecommand\reg{{\rm reg}}
\begin{document}
\title{Regularity comparison of symbolic powers, integral closure of powers and powers of edge ideals}
\author[Arvind Kumar]{Arvind Kumar$^{1, 2}$}
\email{arvindkumar@cmi.ac.in}
\thanks{$^1$ The author is partially supported by Sciences and Engineering Research
Board, India under the National Postdoctoral Fellowship (PDF/2020/001436).}
\thanks{$^2$ The author is also partially supported by the Infosys Foundation.}
\address{Department of Mathematics, Chennai Mathematical Institute, Siruseri
Kelambakkam, India -603103.}
\author[Rajiv Kumar]{Rajiv Kumar}
\email{rajiv.kumar@iitjammu.ac.in}
\address{Department of Mathematics, Indian Institute of Technology Jammu, J\&K, India - 181221.}

\begin{abstract}
We study the regularity of small symbolic powers and integral closure of small powers of edge ideals. We also prove that the regularity of integral closure of powers of edge ideals of graphs with at most two odd cycles is the same as the regularity of their powers. 
\end{abstract}

\subjclass[AMS Subject Classification (2020).]{Primary: 13D02,  13B22, 13F55, Secondary: 05E40}

\keywords{Edge ideals, symbolic powers, integral closure, Castlenuvo-Mumford regularity, odd bicyclic graphs}

\maketitle
\section{Introduction} 
Let $R=\sk[x_1,\ldots,x_n]$ be a standard graded polynomial ring over a field $\sk$, and $I$ be a homogeneous ideal of $R$. One of the important invariants associated with $I$ is the \textit{Castlenuvo-Mumford regularity (simply regularity)} of $I$ (see Section \ref{sec-2} for the definition).  The study of the regularity of homogeneous ideals emerged as an active area of research.  Many researchers have studied the regularity of homogeneous ideals and their powers in the last two decades. One of the important problems in this direction is  to find the regularity  functions $\reg(I^s)$, $\reg(\overline{I^s})$ and $\reg(I^{(s)}) $, where $\overline{I^s}$ denote the integral closure of $I^s$ and $I^{(s)}$ denote the $s$-th symbolic power of $I$ (see Section \ref{sec-2} for the definition).

Cutkosky, Herzog and Trung \cite{CHT} and independently Kodiyalam \cite{vijay} proved that  $\reg(I^s)$ and $\reg(\overline{I^s})$ are eventually linear functions. However, $\reg(I^{(r)})$ is not eventually linear in general, as Catalisano, Trung and Valla \cite[Proposition 7]{CTV93}, proved that if $I$ defines $2q+1$ points on a rational normal curve in $\mathbb{P}^q$, $q \geq 2$, then for all $r \geq 1$, $\reg(I^{(r)})=2r+1+\lfloor \frac{r-2}{q} \rfloor$.  However, for the case of edge ideals of graphs, if Minh's conjecture holds, then $\reg(I^{(r)})$ is an eventually linear functiion.  Minh (see \cite[p.1]{GHJsymbo}) conjectured that for any graph $G$ on the vertex set $\{x_1,\ldots,x_n\}$,  $\reg(I(G)^{(s)})=\reg(I(G)^s)$ for all $s \geq 1$, where $I(G)$ denote the \textit{edge ideal} of $G$ that is generated by $\{x_i x_j \mid \{x_i , x_j \} \in E (G) \}$. Minh's conjecture trivially holds for bipartite graphs due to a result of  Simis, Vasconcelos and Villarreal, (see  \cite{SVV1}), and it has been verified for various classes of non-bipartite graphs by many researchers, for example, Gu et al. \cite{GHJsymbo} verified it for odd cycles, 
Jayanthan and Kumar \cite{JK19} proved it for the clique sum of an odd cycle with certain bipartite graphs, Fakhari proved it for chordal graphs in \cite{Seyed-chordal} and for unicyclic graphs in  \cite{fak-uni},  Kumar et al.  \cite{KKS19} proved this conjecture for complete multipartite graphs and wheel graphs, Kumar and Selvaraja \cite{KS19} and independently Fakhari \cite{SFCW} verified it for Cameron-Walker graphs. Considering the Minh's conjecture and the fact that $I(G)^s \subseteq \overline{I(G)^s} \subseteq I(G)^{(s)}$ for each $s$, one may ask the following question: 
\begin{question}\label{que}
    Let $G$ be a finite simple graph. Does  $$ \reg(I(G)^s)=\reg(\overline{I(G)^s})=\reg(I(G)^{(s)}) \text{ for all } s \ge 1?$$
    More generally, does $ \reg(I(G)^s)=\reg(J)$ whenever  $J$ is a monomial ideal with $I(G)^s \subseteq J \subseteq I(G)^{(s)}?$
\end{question}
  
The following examples demonstrate that the above question generally has a negative answer.
\begin{example}
Let	 $R=\sk[x,y]$ and $I=( x^2, y^2) $. Then, $I^{(s)}=\overline{I^s}=(x,y)^{2s}$ for all $s \geq 1$. Therefore, $\reg\left(\overline{I^s}\right) =2s$ for all $s \geq 1$. Since $I$ is a complete intersection ideal, it follows from \cite[Lemma 4.4]{BHT} that  $\reg(I^s)=2s+1$ for all $ s \geq 1$. Hence, $\reg(I^s) \neq \reg(\overline{I^s})$ for all $s \geq 1.$
\end{example} 
Even if we restrict our attention to squarefree monomial ideals, the above question has a negative answer. 
\begin{example}
Let $R=\sk[x_1,\dots, x_6]$ and $$I=( x_1x_4x_5,x_1x_3x_6,x_2x_3x_4,x_2x_5x_6,x_3x_4x_5,x_3x_4x_6,x_3x_5x_6,x_4x_5x_6). $$ Since $I$ is radical ideal, $I=\overline{I}$, and hence, $\reg(I)=\reg{\left(\overline{I}\right)}$. Using Macaulay2 \cite{M2}, we obtain $\reg\left(I^2\right)=7$ and $\reg\left(I^{(2)} \right)=\reg{\left(\overline{I^2}\right)}=6$. Therefore, $\reg(I^2) \neq \reg(\overline{I^2}) $.
\end{example}

Many researchers have explored the regularity of powers of edge ideals of simple graphs; see \cite{AB, ASH20, AE20, BHT, JNS18,  AS21} and references therein. Also, the regularity function $\reg\left( I(G)^{(s)}\right)$ has been explored by many researchers, see \cite{GHJsymbo, JK19, KKS19, KS19, Seyed-chordal, SFCW, fak-uni}. However, there are no results on the behaviour of the regularity function $\reg\left(\overline{I(G)^s}\right).$ Our aim in this article is to start the study of the regularity function $\reg\left( \overline{I(G)^s} \right)$, and to see how it is behaves as compare to $\reg(I(G)^s)$ and $\reg\left( I(G)^{(s)}\right)?$ We begin with comparing  $\reg(I(G)^s)$, $\reg\left(\overline{I(G)^s}\right)$ and $\reg\left( I(G)^{(s)}\right)$ when $s$ is a small and $G$ is any graph. Recently, in \cite{MNPTV}, it was shown that for any graph $G$, $\reg\left( I(G)^{(s)}\right)=\reg(I(G)^s)$ for $s \le 3$. We prove a more general result; consequently, we recover one of the main results of \cite{MNPTV}. We prove the following: 

\medskip

\noindent\textbf{Theorem \ref{main-sym-thm}.} Let $G$ be a non-bipartite graph. Suppose that smallest induced odd cycle in $G$ has size ${2n+1}.$ Then, $$\displaystyle \reg\left(I(G)^{(s)} \right) =\reg (I(G)^s) \text{ for } s \leq n+1.$$

\medskip

Simis, Vasconcelos and Villarreal in \cite{SVV1} proved that  $I(G)$ is normal  if and only if $G$ is bow-free (see Section \ref{sec-2} for the definition). Therefore, if $G$ has at most one odd cycle, for example, bipartite graphs, unicyclic graphs, etc., then $I(G)$ is normal, and hence,  $\reg\left(\overline{I(G)^s}\right)=\reg(I(G)^s)$ for all $s \geq 1$. Recently, in \cite{CV21}, it was shown that, $\displaystyle \reg\left(\overline{I(G)^{s}}\right) = \reg\left(I(G)^{s}\right)$ for $s \le 4$. We prove a more general result and as a consequence, we recover the main result of \cite{CV21}. We prove the following:

\medskip

\noindent\textbf{Proposition \ref{int-decomposition} and Theorem \ref{k+1}.}  If the smallest size of a bow in $G$ is $k+1$, then $$\displaystyle \reg\left(\overline{I(G)^{s}}\right) = \reg\left(I(G)^{s}\right) \text{ for } s \le k+1.$$ 

\medskip

As we mentioned above,  if $G$ has at most one odd cycle, for example, bipartite graphs, unicyclic graphs, etc., then $I(G)$ is normal, and hence,  $\reg\left(\overline{I(G)^s}\right)=\reg(I(G)^s)$ for all $s \geq 1$. Thus, to study the equality between the regularity functions $\reg\left(\overline{I(G)^s}\right)$ and $\reg(I(G)^s)$, one should first consider the class of odd bicyclic graphs.   A graph is said to be an {\it odd bicyclic graph} if it has exactly two odd cycles. In this case, we prove the two regularity functions are the same. Precisely, we prove the following:

\medskip

\noindent\textbf{Theorem \ref{mainTheorem}.} Let $G$ be an odd bicyclic graph. Then,  $$\displaystyle\reg\left(\overline{I(G)^s}\right)= \reg\left(I(G)^s\right) \text{ for all } s.$$
	
\medskip

The article is organized as follows: We recall definition and notation in the second section. In Section \ref{tech-res}, we obtain some technical results that we will use in subsequent sections. In Section \ref{small-sym}, we study the regularity of small symbolic powers of edge ideals. In Section \ref{small-int}, we study the regularity of integral closure of small powers of edge ideals. In Section \ref{bicyclic-int}, we prove that $\reg\left(\overline{I(G)^{s}}\right) = \reg\left(I(G)^{s}\right)$ for all $s \ge 1$ if $G$ is a bicyclic graph.

\section{Preliminaries}\label{sec-2}
 In this section, we recall definition and notation, which we use in later sections. For undefined terminologies, we refer the readers to the book  \cite{villarreal_book}.  

Let $R=\sk[x_1,\dots, x_n]$	be a standard graded polynomial ring over a field $\sk$, and $I$ be a homogeneous ideal of $R$.  The $s$-th \emph{symbolic power} of $I$ is  defined as: $$I^{(s)}=\bigcap\limits_{\mathfrak{p} \in \ass{(I)}} \left(I^sR_{\mathfrak{p}}\cap R\right),$$ where $\ass(I) $ is the set of associated primes of $I$. The \emph{symbolic Rees algebra } of $I$ is defined as: $$\mathcal{R}_s(I)=\bigoplus\limits_{s=0}^\infty I^{(s)}t^s.$$

 A homogeneous element $r\in R$ is said to be  \emph{integral over} $I$ if there exist  homogeneous elements $a_1, \ldots,a_n \in R$ such that $$r^n+a_1r^{i-1}+\dots+a_{n-1}r+a_n=0$$ and  $a_i\in I^i$ for  $1 \leq i \leq n.$ The set of all elements that are integral  over $I$ is called \emph{integral closure} of $I$, and it is denoted by $\overline{I}$. An ideal $I$ is said to be \emph{integrally closed} if $I=\overline{I}$. If  $I^i$ is integrally closed for all $i\geq 1$, then we say that $I$ is  {\it normal}. 
 The  \emph{Rees algebra and normal Rees algebra of $I$} are algebras $\mathcal{R}[It]=\bigoplus\limits_{i=0}^\infty I^it^i$ 
 and $\overline{\mathcal{R}[It]}=\bigoplus\limits_{i=0}^\infty \overline{I^i}t^i$, respectively.
 
 Let $M$ be a finitely generated non-zero graded $R$-module. For $ i,j \geq 0$,  the $(i,j)$-th \emph{graded Betti number of $M$}, denoted by $\beta_{i,j}^R(M)$, is $\dim_{\sk}\left(\Tor_i^R(M,\sk)_j\right).$ The \emph{Castlenuvo-Mumford regularity of} $M$,  denoted by $\reg(M)$, is  $$\max\left\{j-i \; : \;  \beta_{i,j}^R(M) \neq 0 \right\}.$$ Observe that if $I$ is a non-zero proper homogeneous ideal of $R$, then $\reg(I) =\reg(R/I)+1.$ Also, we make a convention that if $M$ is a zero module, then $\reg(M)=-\infty.$

The following lemma on regularity is used repeatedly in this article. We refer the reader to the book \cite[Chapter 18]{P11} for more properties on regularity.
 
\begin{lemma}\label{reg-lem}
 Let $R$ be a standard graded ring, $M,N$ and $P$ be finitely generated graded $R$-modules. 
 If $ 0 \rightarrow M \xrightarrow{f}  N \xrightarrow{g} P \rightarrow 0$ is a short exact sequence with $f,g$  
 graded homomorphisms of degree zero, then 
 \begin{enumerate}[\rm a)]
  \item $\reg(P) \leq \max\{\reg(N),\reg(M)-1\}$.
  \item $\reg(P) = \reg(N)$, if $\reg(M) <\reg(N)$. 
 \end{enumerate}
\end{lemma}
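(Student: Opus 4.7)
My plan is to derive both statements from the long exact sequence in $\Tor_\bullet^R(-,\sk)$ associated to the short exact sequence, together with the definition $\reg(X)=\sup\{j-i:\Tor_i^R(X,\sk)_j\ne 0\}$ recalled just before the lemma. Since tensoring over $R$ with the residue field $\sk$ is right exact, the short exact sequence $0\to M\to N\to P\to 0$ induces, for each $j\in\mathbb Z$, a long exact sequence of $\sk$-vector spaces
\[
\cdots\to\Tor_i^R(M,\sk)_j\to\Tor_i^R(N,\sk)_j\to\Tor_i^R(P,\sk)_j\xrightarrow{\partial}\Tor_{i-1}^R(M,\sk)_j\to\cdots
\]
This long exact sequence is the only input I will need; everything else is a matter of reading off when the graded pieces can be nonzero.

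For part (a), I would take any pair $(i,j)$ with $\Tor_i^R(P,\sk)_j\ne 0$ and chase through the connecting map. Exactness at $\Tor_i^R(P,\sk)_j$ forces either the preceding map from $\Tor_i^R(N,\sk)_j$ to be nonzero (in which case $\Tor_i^R(N,\sk)_j\ne 0$, so $j-i\le\reg(N)$) or the outgoing $\partial$ to be nonzero (in which case $\Tor_{i-1}^R(M,\sk)_j\ne 0$, so $j-(i-1)\le\reg(M)$, i.e.\ $j-i\le\reg(M)-1$). Taking the supremum over all such $(i,j)$ gives $\reg(P)\le\max\{\reg(N),\reg(M)-1\}$, which is (a). I should handle the edge cases where $N$ or $M$ is zero using the convention $\reg(0)=-\infty$, but these are immediate.

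For part (b), the inequality $\reg(P)\le\reg(N)$ follows from (a): the hypothesis $\reg(M)<\reg(N)$ gives $\reg(M)-1<\reg(N)$, so the maximum in (a) is just $\reg(N)$. For the reverse inequality, I pick $(i,j)$ with $j-i=\reg(N)$ and $\Tor_i^R(N,\sk)_j\ne 0$. By the hypothesis, $j-i=\reg(N)>\reg(M)$, so $\Tor_i^R(M,\sk)_j$ must vanish (otherwise $j-i\le\reg(M)$). The long exact sequence then forces the map $\Tor_i^R(N,\sk)_j\hookrightarrow\Tor_i^R(P,\sk)_j$ to be injective, so $\Tor_i^R(P,\sk)_j\ne 0$ and hence $\reg(P)\ge j-i=\reg(N)$.

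This is a thoroughly standard argument, so I do not anticipate any real obstacles; the only thing requiring care is bookkeeping around the indexing shift in the connecting homomorphism (the $-1$ in $\reg(M)-1$ comes exactly from the drop in homological degree across $\partial$) and the degenerate cases in which one of the three modules is zero, which are handled by the $-\infty$ convention.
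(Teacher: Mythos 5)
Your argument is correct: the long exact sequence in $\Tor^R_\bullet(-,\sk)$ is graded because $f$ and $g$ have degree zero, the exactness chase at $\Tor_i^R(P,\sk)_j$ gives part (a) with the $-1$ coming from the homological shift across the connecting map, and the vanishing of $\Tor_i^R(M,\sk)_j$ for $j-i>\reg(M)$ gives the injectivity needed for part (b). The paper does not prove this lemma but only cites \cite[Chapter 18]{P11}, and your proof is exactly the standard argument found there, so there is nothing to reconcile.
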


Now, we recall notation and definition from graph theory.
	\begin{enumerate}[i)]
	\item A graph $G$ is called an \emph{empty graph} if $E(G)=\emptyset$, otherwise it is called a \emph{non-empty graph}.  
 \item Let $G$ be a graph on the vertex set $V(G)$ and  $W\subset V(G)$. Then a graph $H$ on the vertex set $W$ is called a \emph{subgraph of $G$} if  edge set $E(H)\subset E(G)$.
	\item Let $G$ be a graph on the vertex set $V(G)$ and  $W\subset V(G)$. The \emph{induced subgraph of $G$ on $W$}, denoted by $G[W]$, is a graph on the vertex set $W$ and edge set $$E(G[W])=\{x_ix_j \; : \; x_i,x_j\in W \text{ and }  x_ix_j \in E(G)\}.$$ A subgraph $H$ of $G$ is said to be an {\it induced subgraph } of $G$ if $H=G[W]$ for some $W \subset V(G)$.
		\item A graph on $n$ vertices is called an \emph{$n$-path} if there exists a labeling of  vertices such that its edge set is $\{x_1x_2, x_2x_3, \dots, x_{n-1}x_n\},$ and it is denoted by $P_n$.
		\item A \emph{walk} in $G$ is a sequence  $x_0, x_1, \ldots , x_k$ of vertices such that $x_{i-1}x_i \in E(G)$  for each $1 \leq i \leq k$. 
		\item A graph on $n$ vertices is called an \emph{$n$-cycle} if there exists a labeling of  vertices such that its edge set is $\{x_1x_2, x_2x_3, \dots, x_{n-1}x_n, x_nx_1\},$ and it is denoted by $C_n$.
		\item A \emph{tree} is a connected graph with no induced cycle, and a \emph{forest} is a disconnected graph with no induced cycle. 
		\item A \emph{chordal graph} is a graph that does not have an induced cycle on $n$ vertices with  $n \geq 4$. 
		\item A graph $G$ is called a \emph{bipartite graph} if there exists a partition of $V(G)=V_1\sqcup V_2$ such that $G[V_1]$ and $G[V_2]$ are empty graphs, otherwise it is called a \emph{non-bipartite graph}.
		\item A collection of edges $\{e_1,\ldots, e_s\}$ of $G$ is called an \emph{induced matching of $G$} if the induced subgraph of $G$ on the vertex set $\cup_{j=1}^s e_j$ is a disjoint union of edges. The \emph{induced matching number of $G$}, denoted by $\nu(G)$, is defined as follows:
		$$\nu(G)=\max\{s: \{e_1,\dots, e_s\} \text{ is an induced matching of } G\}.$$
		\item\label{bow} For $u,v \in V(G)$, the \emph{distance between $u$ and $v$}, denoted by $d_G{(u,v)}$, is defined as $$d_G(u,v)=\min\{\text{ number of edges in } P \; : \;  P \text{ is a path between } u \text{ and } v\}.$$
		\item A {\it bow} is a graph consisting of two odd cycles $C$ and $C'$ such that the distance between $C$ and $C'$ is atleast two. A {\it bow-free graph} is a graph that does not have an induced bow.
		\item For subgraphs $H, H'$ of $G$, the {\it distance between $H$ and $H'$}, denoted by $d_G(H,H')$, is $\min \{ d_G(u,v) : \; u \in V(H) \text{ and } v \in V(H') \}$. 
		\item For $T\subset V(G)$, $N_G(T) = \{ x  : xy \in E(G) \text{ for some }  y \in T\}. $
  	\end{enumerate}
 All graphs considered in this article are finite and simple.

\section{Technical Results}\label{tech-res}
In this section, we obtain some technical results to prove the equality of regularity of small symbolic powers, integral closure powers, and ordinary powers of edge ideals in the subsequent sections. For that purpose, we recall the definition of even-connection introduced by Banerjee in \cite{AB}.

\begin{definition}\label{even-con}{\rm
		Let $G$ be a graph and $x, y \in V(G)$. Let $u=e_1 \cdots e_s \in I(G)^s$ for some $e_1,\ldots, e_s \in E(G)$. Then, $x$ and $y$ are {even-connected} with respect to $u=e_1\cdots e_s$ if there is a walk $p_0p_1\cdots p_{2k+1}$, $k\geq 1$ in $G$ such that 
		\begin{enumerate}[i)]
			\item $p_0=x$ and $p_{2k+1}=y$.
			\item For all $1\leq l\leq k$, we have $p_{2l-1}p_{2l}=e_i$ for some $i$.
			\item For all $i$, we have $|\{l\geq 1:p_{2l-1}p_{2l}=e_i \}|\leq|\{j:e_j=e_i\}|.$
		\end{enumerate}
}\end{definition}

Next, we fix the notation that we use throughout this article. 

\begin{notation}\label{notation}
	Let $G$ be a graph. For $T \subset V(G)$, we set   $W_{T}=N_G(T)$, $H_T=G[{V(G) \setminus W_T}]$, and  $\displaystyle m_{T}=\prod_{x\in T}x$.
\end{notation}

Now, we obtain a few auxiliary lemmas.  

\begin{lemma}\label{short-lemma}
Let $G$ be a graph and $T \subset V(G)$. If $m_T \not\in I(G)^k$ and $x m_T \in I(G)^k$ for all $x \in W_T$, then  $$I(G)^{k}:m_{T} =(w  : w \in W_T)+I(H_T).$$
\end{lemma}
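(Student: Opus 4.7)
My plan is to prove the two inclusions $(W_T)+I(H_T) \subseteq I(G)^k : m_T$ and $I(G)^k : m_T \subseteq (W_T)+I(H_T)$ separately, after first extracting from the hypotheses the intermediate fact that $m_T \in I(G)^{k-1}$. That fact is what will let me pass from $xy \in I(G)$ to $xy\, m_T \in I(G)^k$ cleanly in the forward inclusion.

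To obtain $m_T \in I(G)^{k-1}$, I would pick any $w \in W_T$ (assuming $W_T \neq \emptyset$; otherwise the hypothesis is vacuous and the lemma reduces to the trivial $k=1$ case) and expand $wm_T \in I(G)^k$ as $e_1\cdots e_k \mid wm_T$ for edges $e_i$ of $G$. If $w \notin T$, then $wm_T$ is squarefree, and exactly one edge, say $e_1 = wy$ with $y \in T$, contains $w$; the remaining $k-1$ edges then form a matching inside $T\setminus\{y\}$, whose product divides $m_T$, giving $m_T \in I(G)^{k-1}$. The case $w \in T$ requires a careful multiplicity count in $wm_T = w^2\prod_{t \in T\setminus\{w\}} t$: the scenarios ``no $e_i$ meets $w$'' and ``exactly one $e_i$ meets $w$'' both force $e_1\cdots e_k \mid m_T$, contradicting $m_T \notin I(G)^k$, so we must have two edges $e_1 = wy$, $e_2 = wz$ through $w$ with distinct $y,z \in T$, after which $wy \cdot e_3 \cdots e_k$ is a product of $k-1$ edges dividing $m_T$.

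Given $m_T \in I(G)^{k-1}$, the forward inclusion is immediate: each $w \in W_T$ lies in $I(G)^k : m_T$ by hypothesis, and each edge $xy$ of $H_T$ satisfies $xy \cdot m_T \in I(G)\cdot I(G)^{k-1} \subseteq I(G)^k$.

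For the reverse inclusion I would take a monomial $u$ with $u\, m_T \in I(G)^k$, assume $u \notin (W_T)$ so $\supp(u) \cap W_T = \emptyset$, and expand $u\, m_T = e_1\cdots e_k\cdot v$ with $e_i \in E(G)$. The combinatorial key is that each $e_i = ab$ must have both endpoints in $T$ or both in $V(H_T)\setminus T$: a ``mixed'' edge with $a \in T$ and $b \in \supp(u)\setminus T$ would put $b \in N_G(T) = W_T$, contradicting $\supp(u) \cap W_T = \emptyset$. Since $m_T \notin I(G)^k$ rules out the option that every $e_i$ sits inside $T$ (else $e_1\cdots e_k \mid m_T$), some $e_{i_0}$ has both endpoints in $V(H_T)\setminus T$, so $e_{i_0} \in E(H_T)$; being coprime to $m_T$, it divides $u$, placing $u \in I(H_T)$. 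I expect the main technical hurdle to be the multiplicity bookkeeping in the $w \in T$ branch of the intermediate step, where the hypothesis $m_T \notin I(G)^k$ must be invoked to rule out \emph{both} the ``zero $w$-edges'' and ``one $w$-edge'' scenarios before we can identify the pair of edges through $w$.
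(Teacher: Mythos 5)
Your proof is correct, but it takes a genuinely different route from the paper's. The paper picks one $x\in W_T$, writes $xm_T=e_1\cdots e_k m'$, observes that $m_T=e_1\cdots e_{k-1}zm'$ with all the $e_i$ and $z$ supported in $T$, and then computes $I(G)^{k}:m_T=\left(I(G)^{k}:e_1\cdots e_{k-1}\right):zm'$ by invoking Banerjee's even-connection description of $I(G)^{k}:e_1\cdots e_{k-1}$ from \cite[Theorem 6.7]{AB}; the key observation there is that any pair even-connected with respect to edges lying inside $T$ must have both endpoints in $N_G(T)=W_T$. You instead argue by direct monomial divisibility and never invoke even-connections. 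Both arguments pass through the same intermediate fact $m_T\in I(G)^{k-1}$ (the paper gets it for free from $m_T=e_1\cdots e_{k-1}zm'$; you get it by a case analysis on whether the chosen $w$ lies in $T$, which checks out, including the multiplicity bookkeeping). Your approach is more elementary and self-contained; the paper's is shorter and rehearses the even-connection technique it needs anyway in Lemmas \ref{tech2} and \ref{colonLemma}. One step of yours deserves an explicit line of justification: in the reverse inclusion, the parenthetical ``every $e_i$ sits inside $T$ implies $e_1\cdots e_k\mid m_T$'' is not immediate from $e_1\cdots e_k\mid um_T$, since $u$ could a priori contribute extra multiplicity to variables of $T$; it does hold because both endpoints of an edge inside $T$ lie in $W_T$, hence outside $\supp(u)$, so their multiplicity in $um_T$ is still $1$. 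Finally, both your proof and the paper's tacitly assume $W_T\neq\emptyset$ (for $k\geq 2$ the stated equality can fail when $T$ consists of isolated vertices); this is harmless since in every application $G[T]$ has no isolated vertex, but your aside that the empty case ``reduces to the trivial $k=1$ case'' is the one sentence I would rephrase.
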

\begin{proof}
 Let $x \in W_T$. Since $xm_T \in I(G)^k$, there exist  $e_1,\ldots, e_k \in E(G)$ and a monomial $m'$ such that $xm_T = e_1\cdots e_k m'$. Then, $x$ divides some $e_i$ as $m_T \not\in I(G)^k$.  Assume, without loss of generality, that $e_k=xz$ for some $z \in V(G)$. Therefore, $m_T=e_1\cdots e_{k-1} zm'$ and $z \in T$. This implies that  $\displaystyle I(G)^{k}:m_{T} =\left(I(G)^{k}:e_1 \cdots e_{k-1}\right):zm'$.  By \cite[Theorem 6.7]{AB}, $$\displaystyle I(G)^{k}:e_1\cdots e_{k-1}=I(G)+\left(uv \; : \; u \text{ is even-connected to } v \text{ with respect to } e_1\cdots e_{k-1}\right).$$ Let  $u$ be  even-connected to $v$ with respect to  $e_1\cdots e_{k-1}$. By Definition \ref{even-con},  $u,v \in N_G(T)=W_T$. Thus, $\displaystyle I(G)^{k}:e_1\cdots e_{k-1} \subset I(G)+\left(uv: u,v \in W_T \right),$ and hence,
\begin{align*}
I(G)^{k}:m_{T} & =\left(I(G)^{k}:e_1\cdots e_{k-1}\right):zm'\\& \subset I(G):zm'+(uv: u,v \in W_T ):zm'\\& = I(G)+\left(y \; : \; yy'  \in E(G) \text{ and } y' \text{ divides } zm'\right)+ (w :\; w \in W_T)\\&=I(G)+(w  : w \in W_T),
\end{align*} where the last equality follows from the fact that if $y'$ divides $zm'$, then $y' \in T$, and therefore, $y \in W_T.$  Clearly,  $(w : w \in W_T) \subset I(G)^{k}:m_{T}$. Since $I(G)m_{T} \subset I(G)^{k}$, we get  $I(G)^{k}:m_{T} =(w  : w \in W_T)+I(G)=(w  : w \in W_T)+I(H_T)$. Hence, the assertion follows.
\end{proof} 

\begin{lemma}\label{new-lower-lem}
Let $G$ be a graph and $T \subset V(G)$ such that $G[T]$ is a non-empty graph. Then for all $s\geq 1,$ 
\begin{align*}
\reg(I(G)^s) \geq 2s+\nu(G[T])+\max\{\reg(I(H_T)),1\} -2 
\end{align*}
\end{lemma}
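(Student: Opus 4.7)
The plan is to exhibit an induced subgraph $G'$ of $G$ whose regularity of powers hits the claimed lower bound, and combine this with the monotonicity of $\reg(I(-)^s)$ under passage to induced subgraphs.

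Fix an induced matching $\{e_1,\ldots,e_\nu\}$ in $G[T]$ of size $\nu = \nu(G[T])$, with $e_i = \{a_i,b_i\}$. Let $G'$ be the graph on $V(H_T)\cup\{a_1,b_1,\ldots,a_\nu,b_\nu\}$ with edge set $E(H_T)\cup\{e_1,\ldots,e_\nu\}$, so $G'=H_T\sqcup e_1\sqcup\cdots\sqcup e_\nu$. I would first verify that $G'$ is an induced subgraph of $G$: any edge of $G$ between two vertices of $V(G')$ lies either inside $V(H_T)$ (and hence in $E(H_T)$), inside $\bigcup_i\{a_i,b_i\}$ (and hence is some $e_j$, by the induced-matching property in $G[T]$), or straddles both blocks, which is impossible since $V(H_T)\subseteq V(G)\setminus W_T$ while $\{a_i,b_i\}\subseteq T\subseteq W_T$. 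The monotonicity $\reg(I(G')^s)\leq \reg(I(G)^s)$ then follows by modding out $R/I(G)^s$ one variable at a time by each $x_v$ with $v\notin V(G')$, using the short exact sequence $0\to R/(J:x)[-1]\xrightarrow{\cdot x} R/J\to R/(J,x)\to 0$ and the standard fact $\reg(R/(J:x))\leq\reg(R/J)$ for monomial ideals.

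It remains to compute $\reg(I(G')^s)$. Writing $I(G')=I(H_T)+I(M)$ with $M=\bigsqcup_i e_i$, the two summands lie in disjoint sets of variables. Applying the formula for the regularity of powers of sums of ideals in disjoint variables (as established by Nguyen--Vu and others), one has
\[
\reg(I(G')^s)\;=\;\max\bigl\{\reg(I(H_T)^i)+\reg(I(M)^j)-1\;:\;i+j=s+1,\;i,j\geq 1\bigr\}.
\]
Since $I(M)=(e_1,\ldots,e_\nu)$ is generated by a regular sequence of $\nu$ degree-$2$ monomials in disjoint variables, iterating the same disjoint-sum formula (starting from $\reg((e_i)^j)=2j$) yields $\reg(I(M)^j)=2j+\nu-1$. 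The standard bound $\reg(I(H_T)^{i+1})\leq \reg(I(H_T)^i)+2$ for edge ideals makes $i\mapsto \reg(I(H_T)^i)-2i$ non-increasing, so the maximum above is attained at $(i,j)=(1,s)$, giving $\reg(I(G')^s)=\reg(I(H_T))+2s+\nu-2$. When $H_T$ has no edges and $I(H_T)=0$, the graph $G'$ is just $M$ and $\reg(I(G')^s)=2s+\nu-1$, matching the claimed bound with $\max\{\reg(I(H_T)),1\}=1$.

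The hardest step is the invocation of the disjoint-sum power formula for regularity, which is a nontrivial result in its own right; granted this (together with its consequence for $I(M)$), everything else reduces to a routine manipulation of the maximum combined with the induced-subgraph monotonicity.
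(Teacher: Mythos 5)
Your argument is in substance the same as the paper's: pass to an induced subgraph of the form $H_T$ disjoint from a piece of $G[T]$, use the monotonicity $\reg(I(G')^s)\le \reg(I(G)^s)$ for induced subgraphs (this is exactly [BHT, Corollary 4.3], which the paper cites), apply the Nguyen--Vu formula for powers of a sum of ideals in disjoint sets of variables, and read off the term with $j=s$. The one structural difference is that you shrink $G[T]$ to an induced matching $M$ before invoking the sum formula, so that $\reg(I(M)^j)=2j+\nu-1$ is a direct complete-intersection computation, whereas the paper keeps all of $G[T]$ and quotes the lower bound $\reg(I(G[T])^s)\ge 2s+\nu(G[T])-1$ from [BHT, Theorem 4.5]; your variant is marginally more self-contained and also has the small advantage that your $G'$ is genuinely an induced subgraph even when $G[T]$ has isolated vertices. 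Two cautions. First, your assertion that $\reg(I(H_T)^{i+1})\le \reg(I(H_T)^i)+2$ is a ``standard bound'' for edge ideals is not correct: for general graphs this is an open problem (it would imply the conjectural bound $\reg(I(G)^s)\le 2s+\reg(I(G))-2$). Fortunately you do not need it: the lemma only requires a lower bound, and the single term $(i,j)=(1,s)$ of the maximum already gives $\reg(I(G')^s)\ge \reg(I(H_T))+\reg(I(M)^s)-1=2s+\nu+\reg(I(H_T))-2$, so you should simply drop the claim of equality. Second, your sketch of the monotonicity step leans on $\reg(J:x)\le\reg(J)$; the cleaner and standard justification is the restriction-of-Betti-numbers argument behind [BHT, Corollary 4.3] (the minimal generators of $I(G)^s$ supported on $V(G')$ generate $I(G')^s$ precisely because $G'$ is induced). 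Also note that the inclusion $T\subseteq W_T$ you invoke fails for isolated vertices of $G[T]$, but the matching vertices do lie in $W_T=N_G(T)$, which is all your argument actually uses.
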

\begin{proof}
First, note that $H_T\sqcup G[T]$ is an induced subgraph of $G$. Therefore, by \cite[Corollary 4.3]{BHT}, $\reg\left(I(G)^s\right) \geq \reg\left(I(H_T\sqcup G[T])^s\right)$ for all $s\geq 1$. Thus, for all $s\geq 1,$ it is enough to prove that 
$ \reg\left(I(H_T\sqcup G[T])^s\right) \geq 2s+\nu(G[T])+\max\{\reg(I(H_T)),1\} -1 .$ If $H_T$ is an empty graph, then $I(H_T\sqcup G[T])=I(G[T])$, and hence,   $\reg(I(H_T \sqcup G[T])^s)=\reg(I(G[T])^s)\geq 2s+ \nu(G[T])-1,$ by \cite[Theorem 4.5]{BHT}. 

Next, we assume that $H_T$ is a non-empty graph. Thus, for $s\geq 1$, it follows from \cite[Theorem 1.1]{NV19} that \begin{align*}
&\reg\left(I(H_T\sqcup G[T])^{s}\right)=\reg\left((I(H_T)+I(G[T]))^{s}\right)\\&= \max_{\substack{1 \leq i \leq s-1\\ 1 \leq j \leq s}} \Big\{\reg\left(I(H_T)^{s-i}\right) +\reg\left(I(G[T])^{i}\right), \reg\left(I(H_T)^{s-j+1}\right)+\reg\left(I(G[T])^j\right)-1 \Big\}\\& \geq    \reg\left(I(H_T)\right)+\reg\left(I(G[T])^s\right)-1\\&\geq  \reg(I(H_T))+2s+\nu(G[T])-2, 
\end{align*} where the first inequality follows by using $j=s$ in {the} previous equality, and the second inequality follows by using \cite[Theorem 4.5]{BHT}. Hence, the assertion follows.
\end{proof}

We prove the main result of this section that we use in subsequent sections.

\begin{theorem}\label{gen}
Let $G$ be a graph and $k \in \mathbb{N}.$ Let $T_1, \ldots, T_r \subset V(G)$ be such that for each $i$
\begin{enumerate}
    \item $G[T_i]$ is a non-empty graph with no isolated vertex;
    \item $m_{T_i} \notin I(G)^k$ and $xm_{T_i} \in I(G)^k$ for all $x \in W_{T_i}$;
    \item $ |T_i| \leq 2k+\nu(G[T_i])-2;$
    \item $2k+\nu(G[T_i])-2 +\max\{1, \reg(I(H_{T_i})) \} \leq \reg\left( I(G)^k+(m_{T_1},\ldots,m_{T_r})\right).$
\end{enumerate} Then, $$ \reg\left( I(G)^k+(m_{T_1},\ldots,m_{T_r})\right) = \reg(I(G)^k).$$
\end{theorem}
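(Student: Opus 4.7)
My plan is to prove $\reg(J) = \reg(I(G)^k)$ by induction on $r$, where I set $J_0 := I(G)^k$ and $J_\ell := J_{\ell-1} + (m_{T_\ell})$ for $1 \leq \ell \leq r$, so that $J_r$ is the ideal appearing in the conclusion. The workhorse is the short exact sequence
\begin{equation*}
0 \to R/(J_{\ell-1} : m_{T_\ell})(-|T_\ell|) \xrightarrow{\,\cdot m_{T_\ell}\,} R/J_{\ell-1} \to R/J_\ell \to 0,
\end{equation*}
from which Lemma \ref{reg-lem} will let me propagate regularities step by step.

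The central estimate to establish is
\begin{equation*}
\reg(R/(J_{\ell-1} : m_{T_\ell})) + |T_\ell| \;\leq\; \min\{\reg(R/I(G)^k),\, \reg(R/J)\}, \qquad \ell = 1,\ldots, r.
\end{equation*}
To obtain this, I first analyze the ``principal'' colon $R/(I(G)^k : m_{T_\ell})$. Hypotheses (1) and (2) put us in the setting of Lemma \ref{short-lemma}, which yields $I(G)^k : m_{T_\ell} = (W_{T_\ell}) + I(H_{T_\ell})$. Consequently $R/(I(G)^k : m_{T_\ell}) \cong \sk[V(G) \setminus W_{T_\ell}]/I(H_{T_\ell})$ has regularity $\max\{\reg(I(H_{T_\ell})) - 1, 0\}$. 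Combining with hypothesis (3) gives
\begin{equation*}
\reg(R/(I(G)^k : m_{T_\ell})) + |T_\ell| \;\leq\; 2k + \nu(G[T_\ell]) - 3 + \max\{1, \reg(I(H_{T_\ell}))\};
\end{equation*}
the right-hand side is bounded above by $\reg(R/I(G)^k)$ thanks to Lemma \ref{new-lower-lem}, and by $\reg(R/J)$ thanks to hypothesis (4).

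The main obstacle is that for $\ell \geq 2$ the colon $J_{\ell-1} : m_{T_\ell}$ strictly contains $I(G)^k : m_{T_\ell}$, picking up the extra monomial generators $m_{T_j}/\gcd(m_{T_j}, m_{T_\ell})$ for $j < \ell$. Since quotienting by additional monomial generators can in general raise regularity, a careful combinatorial argument---examining the images of these extras modulo $W_{T_\ell}$ in $\sk[V(G) \setminus W_{T_\ell}]$ and applying hypotheses (1)--(3) to the involved $T_j$---is needed to show that the bound above still holds with $I(G)^k$ replaced by $J_{\ell-1}$. Once the estimate is secured at every step, iterated application of Lemma \ref{reg-lem}(a) along the short exact sequences yields $\reg(R/J) \leq \reg(R/I(G)^k)$, while applying the middle-term regularity bound (for which the upper bound by $\reg(R/J)$ above is crucial) yields the reverse inequality $\reg(R/I(G)^k) \leq \reg(R/J)$. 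Together these give $\reg(J) = \reg(I(G)^k)$.
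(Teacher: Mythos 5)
Your outline reproduces the paper's strategy almost exactly: the same filtration $J_0=I(G)^k$, $J_\ell=J_{\ell-1}+(m_{T_\ell})$, the same short exact sequences, Lemma \ref{short-lemma} for the principal colon, Lemma \ref{new-lower-lem} plus hypothesis (4) for the two-sided bookkeeping with Lemma \ref{reg-lem}. That bookkeeping is sound. But the proposal has a genuine gap precisely at the point you flag as ``the main obstacle'': you assert that a careful combinatorial argument is \emph{needed} to control the extra generators $(m_{T_1},\ldots,m_{T_{\ell-1}}):m_{T_\ell}=\bigl(m_{T_i\setminus T_\ell}: i<\ell\bigr)$, but you do not supply it. Since adding monomial generators to a colon ideal can change its regularity in either direction, the central estimate is unproven for $\ell\geq 2$, and the whole induction rests on it.

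The missing step is resolved in the paper by showing that the colon does not change at all: after discarding redundant generators so that $m_{T_i}\notin(m_{T_j})$ for $i\neq j$, one has $J_{\ell-1}:m_{T_\ell}=I(G)^k:m_{T_\ell}=(w: w\in W_{T_\ell})+I(H_{T_\ell})$, because each $m_{T_i\setminus T_\ell}$ already lies in the right-hand side. Indeed, if $m_{T_i\setminus T_\ell}\in I(G[T_i])$ it lies in $I(G)\subseteq (w: w\in W_{T_\ell})+I(H_{T_\ell})$. Otherwise $T_i\setminus T_\ell$ is a nonempty independent set of $G[T_i]$ (nonempty since $m_{T_\ell}\notin(m_{T_i})$), so any $u\in T_i\setminus T_\ell$ has, by hypothesis (1), a neighbour $w$ in $T_i$ which cannot lie in $T_i\setminus T_\ell$; hence $w\in T_\ell$, so $u\in W_{T_\ell}$ and $m_{T_i\setminus T_\ell}\in(w: w\in W_{T_\ell})$. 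This is exactly where hypothesis (1) (no isolated vertices in $G[T_i]$) enters, a hypothesis your sketch never actually uses. Once this identification of the colon is in place, your estimate holds for every $\ell$ and the rest of your argument goes through as written.
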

\begin{proof}
Assume, without loss of generality, that $m_{T_i} \notin (m_{T_j})$ {for} $i \neq j$. Set $I_0=I(G)^{k}$, and for $1\leq j\leq r$, $I_j=I(G)^{k}+ ( m_{T_1},\dots , m_{T_j})$.  We need to prove that $\reg(I_0)=\reg(I_{r})$. For $1 \leq j \leq r$, consider the following short exact sequences:
 \begin{align*}
    0\rightarrow [{I_{j-1}:m_{T_{j}}}](-|T_j|) \xrightarrow{\cdot m_{T_{j}}} {I_{j-1}} \rightarrow {I_j} \rightarrow 0.
\end{align*} Next, we compute $I_{j-1}:m_{T_j}$ for $1 \leq j \leq r.$ By Lemma \ref{short-lemma}, we know that $$I_0:m_{T_1} =(w \; : \; w \in W_{T_1})+I(H_{T_1}).$$ So assume that $1 < j \leq r.$ Then, \begin{align*}
    I_{j-1}: m_{T_j} &=\left(I(G)^{k}+ ( m_{T_1},\dots , m_{T_{j-1}}) \right) : m_{T_j}\\&=I(G)^{k}:m_{T_j}+ ( m_{T_1},\dots , m_{T_{j-1}}):m_{T_j} \\ & =(w \; : \; w \in W_{T_j})+I(H_{T_j}) + ( m_{T_1},\dots , m_{T_{j-1}}):m_{T_j},
\end{align*} where the last equality follows from Lemma \ref{short-lemma}. Note that \begin{align*}
  ( m_{T_1},\dots , m_{T_{j-1}}):m_{T_j} &= \left( m_{ T_i \setminus T_j} \; : \; 1 \leq i \leq j-1\right).   
\end{align*} If $m_{T_i \setminus T_j}\in I(G[T_i])$, then $m_{T_i \setminus  T_j} \in I(G) \subset (w \; : \; w \in W_{T_j})+I(H_{T_j}).$ Suppose that $m_{T_i \setminus T_j} \notin I(G[T_i]).$ Since $m_{T_j} \notin (m_{T_i})$, we get $m_{ T_i \setminus T_j}\neq 1$. Therefore, there exist variable $u\in T_i\setminus T_j$ and $w \in T_j$ such that $uw \in E(G[T_i])$ as $G[T_i]$ is a non-empty graph.  So, in this case,  $m_{ T_i \setminus T_j} \in (w \; : \; w\in W_{T_j}),$ as $u \in W_{T_j}.$ Thus, for each $1 \leq j \leq r$, $( m_{T_1},\dots , m_{T_{j-1}}):m_{T_j} \subset (w \; : \; w \in W_{T_j})+I(H_{T_j})$, and hence, $I_{j-1}:m_{T_j}=(w \; : \; w \in W_{T_j})+I(H_{T_j})$. Therefore, for each $1 \leq j \leq r$, \begin{align*}
    \reg(I_{j-1} :m_{T_j} (-|T_j|)) & = |T_j| +\reg(I_{j-1}:m_{T_j})\\& =|T_j|+ \reg((w \; : \; w \in W_{T_j})+I(H_{T_j})) \\ & =|T_j| + \max\{1, \reg(I(H_{T_j})) \}\\& \leq 2k+\nu(G[T_j])-2 +\max\{1, \reg(I(H_{T_j})) \}\\& \leq  \delta, 
\end{align*} where $\delta=\max\{2k+\nu(G[T_j])-2 +\max\{1, \reg(I(H_{T_j})) \}\; : \; 1 \leq j \leq r\}.$  Suppose  that $\reg(I(G)^{k}) > \delta.$ Then, by recursively  applying  Lemma \ref{reg-lem} on short exact sequences, we get  \begin{align*}
	  \reg\left(I_0\right)=\reg\left(I_{1}\right)=\cdots =\reg\left(I_r\right).  
	\end{align*} Next, we assume that $\reg(I(G)^{k}) \leq \delta .$ Then,  by Lemma \ref{new-lower-lem}, we get that $\reg(I(G)^{k}) = \delta.$ By applying Lemma \ref{reg-lem} on short exact sequences, we get
	\begin{align*}
	  \reg\left(I_r\right) & \leq \max\Big\{ \reg\left(I_{r-1}\right), \reg\left( I_{r-1}:m_{T_r} (-|T_r|) \right)-1 \Big\} \\ & \leq \max\Big\{ \reg\left(I_{r-2}\right), \reg\left( I_{r-2}:m_{T_{r-1}} (-|T_{r-1}|) \right)-1, \reg\left( I_{r-1}:m_{T_r}(-|T_r|) \right)-1 \Big\} \\ & \leq \cdots \cdots \cdots \cdots \cdots \cdots (\text{ continuing this process })\\ & \leq \max\Big\{ \reg\left(I_{0}\right), \reg\left( I_{i-1}:m_{T_i}(-|T_i|) \right)-1 \text{ for } 1 \leq i\leq  r \Big\} \\ & \leq \delta= \reg(I_0).  
	\end{align*} Now, by hypothesis, $\reg(I_r)  \geq \delta.$ Thus, $\reg(I_0)=\reg(I_r). $ Hence, the assertion follows.
\end{proof}

\section{Regularity of small symbolic powers of edge ideals}\label{small-sym}
In this section, we prove Minh's conjecture for the smallest $s$ for which $I(G)^{(s)} \neq I(G)^s$.  
We start by computing a monomial generating set of small symbolic powers of edge ideals. Bernal et al. in \cite{BRC2011} obtained a minimal generating set of symbolic Rees algebra of edge ideals of clutter (hypergraphs) in terms of vertex cover number of duplication and parallelization of clutter. We first recall here these terminologies and some of their properties.

\begin{definition}{\rm Let $G$ be a graph on the vertex set $V$. Then,
\begin{enumerate}[i)]
\item the \emph{vertex cover number} of $G$, denoted by $\tau(G)$, is $\min\{|C|: C \text{ is a vertex cover of } G\}$;
\item $G$ is called \emph{decomposable} if there
exists a partition of $V=V_1\sqcup\cdots\sqcup V_r$ such that
$\sum_{i=1}^r \tau(G[V_i])=\tau(G)$. If $G$ is not decomposable, then
$G$ is called \emph{indecomposable};
\end{enumerate}
}\end{definition}
 
\begin{definition}{\rm Let $G$ be a graph on the vertex set $\{x_1,\ldots,x_n\}$ and ${\bf
  v}=(v_1,\dots, v_{n})\in\mathbb{Z}^{n}_{\geq 0}.$
	\begin{enumerate}[\rm i)]
		\item The \emph{duplication} of $G$ with respect to a vertex $x$ of $G$ is the graph obtained from $G$ by adding a new vertex $x'$ and  edges $\{x'y \;:\; y\in N_G(x)\}$.
		\item The \emph{parallelization} of $G$ with respect to ${\bf
		  v}$, denoted by $G^{\bf v}$, is the graph obtained from $G$
		  by deleting $x_i$ if $v_i= 0$ and duplicating $v_i-1$ times
		  $x_i$ if $v_i\geq 1$.
	\end{enumerate}
}\end{definition}

For a vector $\mathbf{v} \in \mathbb{Z}^n_{\geq 0}$, $\mathbf{x}^{\mathbf{v}}$ denote the monomial $x_1^{v_1}\cdots x_n^{v_n}$ in the polynomial ring $ \sk[x_1, \ldots, x_n]$.
We now obtain a monomial generating set of small symbolic powers of edge ideals.

\begin{theorem}\label{sym-decomp}
Let $G$ be a non-bipartite graph. Suppose that smallest induced odd cycle in $G$ has size ${2n+1}.$ Then, we have the followings:
\begin{enumerate}
    \item $I(G)^{(s)}= I(G)^s$ for $s \leq n;$
    \item $I(G)^{(n+1)}= I(G)^{n+1}+ (m_C : C \text{ is an induced odd cycle in } G \text{ of size } 2n+1).$
\end{enumerate}
\end{theorem}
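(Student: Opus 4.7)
The plan is to invoke the result of Bernal et al.\ \cite{BRC2011} cited just before the theorem: the symbolic Rees algebra $\mathcal{R}_s(I(G))$ is minimally generated as an $R$-algebra by the monomials $\mathbf{x}^{\mathbf{v}} t^{\tau(G^{\mathbf{v}})}$ as $G^{\mathbf{v}}$ ranges over the indecomposable parallelizations of $G$. The degree-$s$ piece of this algebra is therefore generated as an $R$-module by products of such algebra generators whose $t$-degrees sum to $s$, so both statements reduce to classifying the indecomposable parallelizations $G^{\mathbf{v}}$ with $\tau(G^{\mathbf{v}})\leq n+1$.

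Two observations drive the classification. First, the projection $\pi\colon V(G^{\mathbf{v}})\to V(G)$ sending each duplicate to its original vertex maps any closed odd walk of length $\ell$ in $G^{\mathbf{v}}$ to a closed odd walk of length $\ell$ in $G$, and any closed odd walk in $G$ contains an odd cycle of no greater length; thus the odd girth of $G^{\mathbf{v}}$ is at least that of $G$, which equals $2n+1$. Second, by K\"onig's theorem a bipartite graph $H$ without isolated vertices admits a maximum matching that, together with the leftover vertices, partitions $V(H)$ into pieces whose $\tau$-sum equals $\tau(H)$, so the only indecomposable bipartite parallelization is $K_2$. Combining these, any indecomposable non-bipartite $G^{\mathbf{v}}$ contains an induced $C_{2k+1}$ with $k\geq n$, and since $\tau$ is monotone under taking induced subgraphs we get $\tau(G^{\mathbf{v}})\geq \tau(C_{2k+1})=k+1\geq n+1$.

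Part (1) follows at once: for $s\leq n$ the only indecomposable $G^{\mathbf{v}}$ with $\tau(G^{\mathbf{v}})\leq s$ is $K_2$, whose algebra generator is an edge of $G$, so $I(G)^{(s)}$ is generated as an $R$-module by products of $s$ edges and hence equals $I(G)^s$. For part (2), the extra algebra generators at $t$-degree $n+1$ come from indecomposable $G^{\mathbf{v}}$ with $\tau(G^{\mathbf{v}})=n+1$; the chain $n+1\geq k+1\geq n+1$ forces $k=n$, so $G^{\mathbf{v}}$ contains an induced $C:=C_{2n+1}$. To show $G^{\mathbf{v}}=C$, set $W:=V(G^{\mathbf{v}})\setminus V(C)$: if $G^{\mathbf{v}}[W]$ has an edge then $\tau(G^{\mathbf{v}})\geq \tau(C)+\tau(G^{\mathbf{v}}[W])\geq n+2$, contradicting $\tau(G^{\mathbf{v}})=n+1$, and if $G^{\mathbf{v}}[W]$ is edgeless and non-empty then the partition $\{V(C)\}\cup\{\{w\}:w\in W\}$ has $\tau$-sum $n+1=\tau(G^{\mathbf{v}})$, contradicting indecomposability. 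Finally, any two non-adjacent vertices of $C_{2n+1}$ have distinct neighborhoods, whereas distinct duplicates of any single $x_i$ share the same neighborhood in $G^{\mathbf{v}}$; hence every $v_i\leq 1$ and $\mathbf{v}$ is the indicator vector of the vertex set of an induced $(2n+1)$-cycle $C$ of $G$, giving algebra generator $m_C$. Assembling the generators yields $I(G)^{(n+1)}=I(G)^{n+1}+(m_C : C \text{ an induced odd cycle of } G \text{ of size } 2n+1)$.

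The main technical hurdle is the rigidity step in part (2) forcing $G^{\mathbf{v}}=C$ rather than a strictly larger indecomposable graph containing $C$. It uses simultaneously the monotonicity of $\tau$ under induced subgraphs, the inequality $\tau(H)\geq \sum_i \tau(H[V_i])$ for arbitrary partitions (obtained by restricting any minimum vertex cover of $H$ to each part), and the neighborhood distinctness in $C_{2n+1}$; once these are in place, the rest is bookkeeping of $t$-degree contributions.
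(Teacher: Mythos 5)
Your proof is correct and takes essentially the same route as the paper: both rest on the Bernal et al.\ description of the symbolic Rees algebra and reduce to classifying the indecomposable parallelizations $G^{\mathbf v}$ with $\tau(G^{\mathbf v})\le n+1$, using the partition $V(C)\sqcup\bigsqcup_{x}\{x\}$ together with indecomposability to force $G^{\mathbf v}=C$. The only real differences are cosmetic: you re-derive part (1) and the non-bipartiteness of indecomposable parallelizations (via K\"onig) from the same classification where the paper cites prior results, and your final neighborhood-distinctness argument ruling out entries $v_i\ge 2$ replaces the paper's device of locating $C$ inside the support subgraph $H\subseteq G$ from the start.
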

\begin{proof} (1) Follows from \cite[Theorem 4.13]{DDAGHN}.
\par (2)   It follows from \cite[Theorem 2.6]{BRC2011} that $$\mathcal{R}_s(I(G))=\sk[\{x^{\bf v}t^b: G^{\bf v} \text{ is an indecomposable graph and } b =\tau(G^{\bf v})\}].$$ By part $(1)$, $I(G)^{(s)}= I(G)^s$ for $s \leq n$ and by \cite[Theorem 4.13]{DDAGHN}, $I(G)^{n+1}\neq I(G)^{(n+1)}.$ Consequently, we have  $I(G)^{(n+1)}=I(G)^{n+1}+J$, where $$J=(\{x^{\bf v}: G^{\bf v} \text{ is an indecomposable graph and }  \tau(G^{\bf v})=n+1\}).$$ Let $G^{\bf v}$ be an indecomposable graph with $\tau(G^{\bf v})=n+1$. Then, by \cite[Corollary 1b]{HP}, $G^{\bf v}$ contains an odd cycle.  Let $i_1<\cdots< i_r$ such that $v_{i_j}>0$ for $1 \leq j \leq r$ and $v_i =0$ if $i \notin \{i_1,\ldots, i_r\}$. Let $H$ be the induced subgraph of $G$ on the vertex set $\{x_{i_1},\ldots, x_{i_r}\}$ and ${\bf w} \in \mathbb{Z}^{r}_{\geq 0}$ such that $w_j = v_{i_j}$ for all $1\leq j \leq r.$ Then note that $G^{\bf v} =H^{\bf w}.$ Since parallelization of a bipartite graph is bipartite and $H^{\bf w}$ is a non-bipartite graph, we get that $H$ is a non-bipartite graph. Let $C$ be a smallest induced odd cycle in $H$. We claim that $G^{\bf v} =C.$  Observe that $|V(C)| \geq 2n+1.$   Consider the following decomposition  $$V(G^{\bf v})=V(H^{\bf w})= V(C)\bigsqcup \limits_{x\in V(H^{\bf w})\setminus V(C)}\{x\} .$$ Then, \begin{align*}
   \displaystyle \tau(G^{\bf v}) &= \tau(H^{\bf w})\\& \geq \tau(C)+ \sum\limits_{x\in V(H^{\bf w})\setminus V(C)}\tau({H^{\bf w}}[{\{x\}}])\\&=\frac{|V(C)|+1}{2}+\sum\limits_{x\in V(H^{\bf w})\setminus V(C)} 0\\& \geq n+1.
\end{align*} Thus, $\tau(G^{\bf v})= \tau(C)+ \sum\limits_{x\in V(H^{\bf w})\setminus V(C)}\tau({H^{\bf w}}[{\{x\}}])$ and $|V(C)|=2n+1$.  Now, if $G^{\bf v} \neq C$, then $G^{\bf v}=H^{\bf w}$ is a decomposable graph which is a contradiction. Thus, $G^{\bf v}=C$. Hence, $J=(m_C : C \text{ is an induced odd cycle in } G \text{ of size } 2n+1)$, and the assertion follows.
\end{proof}

Now, we prove Minh's conjecture for the smallest $s$ for which $I(G)^s \neq I(G)^{(s)}$. If  smallest induced odd cycle in $G$ has size $2n+1$, then by Theorem \ref{sym-decomp}, we know that $n+1$ is the smallest $s$ for which $I(G)^s \neq I(G)^{(s)}$. So, we aim to prove that $\reg\left(I(G)^{(n+1)}\right) =\reg(I(G)^{n+1})$.

\begin{lemma}\label{lower-sym-lemma}
Let $G$ be a non-bipartite graph. If $C$ is an induced odd cycle in $G$, then for all $s\geq 1,$ 
\begin{align*}
\displaystyle \reg\left(I(G)^{(s)}\right) \geq 
2s+\nu(C)+\max\{\reg(I(H_C)),1\}-2 .
\end{align*}
\end{lemma}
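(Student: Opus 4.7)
The plan is to follow the blueprint of Lemma~\ref{new-lower-lem}, replacing ordinary powers with symbolic powers at each step. First I would observe that $H_C \sqcup C$ is an induced subgraph of $G$: since $V(C) \subseteq W_C = N_G(V(C))$ (each vertex of $C$ has a neighbour inside $C$), the vertex set $V(H_C) = V(G) \setminus W_C$ is disjoint from $V(C)$, and by the construction of $H_C$ no vertex of $H_C$ is adjacent to any vertex of $C$ in $G$.

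Next I would invoke the induced-subgraph monotonicity for symbolic powers of edge ideals, which is the natural analog of \cite[Corollary 4.3]{BHT} for symbolic powers. Concretely, for an induced subgraph $H$ of $G$ one has $I(H)^{(s)} = I(G)^{(s)} \cap \sk[V(H)]$, and a standard specialization/restriction argument (setting the variables outside $V(H)$ equal to~$1$, or to~$0$ and tracking local cohomology) yields
\[
\reg\bigl(I(G)^{(s)}\bigr) \geq \reg\bigl(I(H_C \sqcup C)^{(s)}\bigr).
\]

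I would then split into two cases. If $I(H_C) = 0$, then $I(H_C \sqcup C)^{(s)} = I(C)^{(s)}$ and $\max\{\reg(I(H_C)),1\} = 1$, so it suffices to show $\reg(I(C)^{(s)}) \geq 2s + \nu(C) - 1$. Since $C$ is an odd cycle, Minh's conjecture holds for $I(C)$ by \cite{GHJsymbo}, giving $\reg(I(C)^{(s)}) = \reg(I(C)^s)$, and the latter is bounded below by $2s + \nu(C) - 1$ via \cite[Theorem 4.5]{BHT}. If $I(H_C) \neq 0$, then $I(H_C)$ and $I(C)$ are nonzero edge ideals in disjoint variable sets and $I(H_C \sqcup C) = I(H_C) + I(C)$. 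I would then apply the symbolic-power analog of \cite[Theorem 1.1]{NV19} for sums of ideals in disjoint variables; specializing to the index choice that the proof of Lemma~\ref{new-lower-lem} uses ($j = s$), this yields
\[
\reg\bigl((I(H_C)+I(C))^{(s)}\bigr) \geq \reg(I(H_C)) + \reg(I(C)^{(s)}) - 1 \geq \reg(I(H_C)) + 2s + \nu(C) - 2,
\]
using the odd-cycle estimate again.

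The principal obstacle is to make the symbolic-power sum formula rigorous, as the analog of \cite[Theorem 1.1]{NV19} for $\reg((I+J)^{(s)})$ in disjoint variables is not in \cite{NV19} itself. One way around this is to use the decomposition $(I+J)^{(s)} = \sum_{i+j=s} I^{(i)} J^{(j)}$ (which holds in disjoint variable sets, since the associated primes of $I+J$ are sums $\mathfrak{p}+\mathfrak{q}$ of those of $I$ and $J$), together with a Künneth-type computation of graded Betti numbers that extracts the estimate $\reg((I+J)^{(s)}) \geq \reg(I) + \reg(J^{(s)}) - 1$ from the subideal $I \cdot J^{(s-1)} + J^{(s)} \subseteq (I+J)^{(s)}$. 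Once this lower bound is in hand, combining the two cases produces exactly the stated inequality.
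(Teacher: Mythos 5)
Your outline follows the paper's proof step for step: reduce to the induced subgraph $H_C\sqcup C$, split on whether $H_C$ is empty, and in the non-empty case apply a sum formula for symbolic powers of ideals in disjoint variables with the index choice $j=s$, finishing with the odd-cycle bound $\reg(I(C)^{(s)})\ge 2s+\nu(C)-1$. The ingredient you identify as the ``principal obstacle'' is not actually missing from the literature: the symbolic-power analog of \cite[Theorem 1.1]{NV19} is exactly \cite[Theorem 5.11]{HHTT} (see also \cite[Corollary 4.6]{HJKN21}), and this is what the paper cites; likewise the induced-subgraph monotonicity for symbolic powers is \cite[Corollary 4.5]{GHJsymbo}, so neither needs to be re-derived.

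The one place where your improvised substitute is not sound as written is the claim that the estimate $\reg\bigl((I+J)^{(s)}\bigr)\ge \reg(I)+\reg(J^{(s)})-1$ can be ``extracted from the subideal $I\cdot J^{(s-1)}+J^{(s)}\subseteq (I+J)^{(s)}$.'' Castelnuovo--Mumford regularity is not monotone under inclusion of ideals, so containing a subideal of large regularity gives no lower bound on the regularity of the ambient ideal; this is precisely why the actual proof of the sum formula in \cite{HHTT} has to combine the decomposition $(I+J)^{(s)}=\sum_{i+j=s}I^{(i)}J^{(j)}$ with a filtration whose graded pieces are controlled via the Tor-vanishing of the maps $I^{(i)}\to I^{(i-1)}$ and $J^{(j)}\to J^{(j-1)}$ (the $\partial^*$ condition), yielding an exact formula and in particular the lower bound. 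If you replace your K\"unneth sketch by a citation of \cite[Theorem 5.11]{HHTT}, your argument becomes the paper's proof verbatim; as it stands, that final step is a genuine gap.
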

\begin{proof}
First, note that $H_C\sqcup C$ is an induced subgraph of $G$. By \cite[Corollary 4.5]{GHJsymbo}, $\reg\left(I(G)^{(s)}\right) \geq \reg\left(I(H_C\sqcup C)^{(s)}\right)$ for all $s\geq 1$. So, it is sufficient  to prove that 
\begin{align*}
\displaystyle \reg\left(I(H_C \sqcup C)^{(s)}\right) \geq 
2s+\nu(C)+\max\{\reg(I(H_C)),1\}-2 .
\end{align*}
 If $H_C$ is an empty graph, then $I(H_C\sqcup C)=I(C)$, and hence,   $\reg(I(H_C \sqcup C)^{(s)})=\reg(I(C)^{(s)})\geq 2s+ \nu(C)-1,$ by \cite[Theorem 4.6]{GHJsymbo}. Thus,  the assertion follows.
 
Next, we assume that $H_C$ is a non-empty graph. It follows from \cite[Theorem 5.11]{HHTT}(or  \cite[Corollary 4.6]{HJKN21}) that \begin{align*}
&\reg\left(I(H_C\sqcup C)^{(s)}\right)=\reg\left((I(H_C)+I(C))^{(s)}\right)\\&= \max_{\substack{1 \leq i \leq s-1\\ 1 \leq j \leq s}} \Big\{\reg\left(I(H_C)^{(s-i)}\right) +\reg\left(I(C)^{(i)}\right), \reg\left(I(H_C)^{(s-j+1)}\right)+\reg\left(I(H_C)^{(j)}\right)-1 \Big\}\\& \geq    \reg\left(I(H_C)\right)+\reg\left(I(C)^{(s)}\right)-1\\&\geq  2s+\nu(C)+\reg(I(H_C))-2, 
\end{align*} where the first inequality follows by using $j=s$ in previous equality, and the second inequality follows by using \cite[Theorem 4.6]{GHJsymbo}. Hence, the assertion follows.
\end{proof}

\begin{theorem}\label{main-sym-thm}
Let $G$ be a non-bipartite graph. Suppose that smallest induced odd cycle in $G$ has size ${2n+1}.$ Then, $\reg\left(I(G)^{(s)} \right) =\reg (I(G)^s)$ for $s \leq n+1.$
\end{theorem}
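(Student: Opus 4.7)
The plan is to reduce the theorem to an application of Theorem \ref{gen}. For $s \leq n$ there is nothing to prove, since Theorem \ref{sym-decomp}(1) gives $I(G)^{(s)} = I(G)^s$. The entire content therefore lies in the case $s = n+1$, where Theorem \ref{sym-decomp}(2) tells us that
\[
I(G)^{(n+1)} \;=\; I(G)^{n+1} + (m_{T_1},\ldots,m_{T_r}),
\]
where $T_1,\ldots,T_r$ are the vertex sets of the induced odd cycles of $G$ of size $2n+1$. I would then apply Theorem \ref{gen} with $k=n+1$ and these $T_i$'s, so it suffices to verify its four hypotheses.

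For hypothesis (1), each $G[T_i]$ is a copy of $C_{2n+1}$, hence non-empty and with no isolated vertex. For hypothesis (2), note that $\deg(m_{T_i}) = 2n+1 < 2(n+1)$, so $m_{T_i}\notin I(G)^{n+1}$; and for any $x\in W_{T_i}$ adjacent to some $v\in T_i$, after writing $C=v v_2 v_3 \cdots v_{2n+1} v$ one can factor
\[
xm_{T_i} \;=\; (xv)\,(v_2 v_3)(v_4 v_5)\cdots (v_{2n}v_{2n+1}),
\]
which is a product of $n+1$ edges of $G$, hence lies in $I(G)^{n+1}$. Hypothesis (3) is the inequality $|T_i| = 2n+1 \leq 2(n+1) + \nu(C_{2n+1}) - 2 = 3n$, which holds because $\nu(C_{2n+1})=n$ and $n\geq 1$ (the smallest odd cycle has length $\geq 3$). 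For hypothesis (4), Lemma \ref{lower-sym-lemma} applied to each induced odd cycle $C=G[T_i]$ gives
\[
\reg\bigl(I(G)^{(n+1)}\bigr) \;\geq\; 2(n+1) + \nu(G[T_i]) + \max\{1,\reg(I(H_{T_i}))\} - 2,
\]
which is exactly the required bound.

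With all four hypotheses verified, Theorem \ref{gen} yields $\reg\bigl(I(G)^{n+1} + (m_{T_1},\ldots,m_{T_r})\bigr) = \reg(I(G)^{n+1})$, i.e.\ $\reg(I(G)^{(n+1)}) = \reg(I(G)^{n+1})$, completing the proof. The only subtle point is the combinatorial identity $\nu(C_{2n+1}) = n$ together with the perfect-matching factorization of $xm_{T_i}$; the rest is bookkeeping, since the lower bound coming from Lemma \ref{lower-sym-lemma} is perfectly tuned to match the upper bound coming from the short exact sequences inside Theorem \ref{gen}. The main obstacle has already been cleared by assembling Theorem \ref{sym-decomp}, Theorem \ref{gen}, and Lemma \ref{lower-sym-lemma}, each of which carries the respective ingredient: the precise generating set of $I(G)^{(n+1)}$, the regularity comparison for ideals of the form $I(G)^k + (m_{T_i})$, and the matching lower bound for symbolic powers in the presence of an induced odd cycle.
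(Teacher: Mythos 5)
Your proposal follows the paper's proof essentially verbatim: reduce to $s=n+1$ via Theorem \ref{sym-decomp}, take the $T_i$ to be the vertex sets of the induced odd cycles of size $2n+1$, verify the four hypotheses of Theorem \ref{gen} (with the same factorization of $xm_{T_i}$ into $n+1$ edges), and use Lemma \ref{lower-sym-lemma} for hypothesis (4). One correction: the identity $\nu(C_{2n+1})=n$ that you single out as ``the only subtle point'' is false --- an induced matching of a cycle cannot use two edges sharing a neighbour, and in fact $\nu(C_{2n+1})=\lfloor (2n+1)/3\rfloor$ (e.g.\ $\nu(C_5)=1$). Fortunately this does not damage the argument, since hypothesis (3) reads $2n+1\leq 2(n+1)+\nu(C_{2n+1})-2$, which only needs $\nu(C_{2n+1})\geq 1$; this is exactly how the paper phrases it, without ever evaluating $\nu(C_{2n+1})$.
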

\begin{proof}
By Theorem \ref{sym-decomp}, $I(G)^{(s)}=I(G)^s$ for $s \leq n$. Therefore, $\reg\left(I(G)^{(s)} \right) =\reg (I(G)^s)$ for $s \leq n.$  Let $C_1,\ldots, C_r$ be the induced odd cycles of size $2n+1$ in $G$.
By Theorem \ref{sym-decomp}, $$I(G)^{(n+1)}=I(G)^{n+1}+(m_{C_i} : 1 \leq i \leq r).$$ Since $\deg (m_{C_i}) =2n+1$ and minimum degree of an element in $I(G)^{n+1}$ is $2n+2$, $m_{C_i} \notin I(G)^{n+1}$. Also, note that $|C_i|=2n+1 \leq 2(n+1)+\nu(C)-2$.  Let $V(C_i)= \{x_{i_1},\ldots,x_{i_{2n+1}}\}$ and $x \in W_{C_i}$. Then, for some $j$, $xx_{i_j} \in E(G)$ which implies that $$\displaystyle xm_{C_i}=x_{i_1}\cdots x_{i_{j-1}} x_{i_{j+1}}\cdots x_{i_{2n+1}}x x_{i_j} \in I(G)^{n+1}.$$ Next, by Lemma \ref{lower-sym-lemma}, for each $i$, we have $\reg\left(I(G)^{(n+1)}\right) \geq 
2n+\nu(C_i)+\max\{\reg(I(H_{C_i})),1\} .$ Thus, $C_1,\ldots, C_r$ satisfy the hypothesis of Theorem \ref{gen}. Hence, the assertion follows.
\end{proof}

\section{Regularity of small integral powers of edge ideals}\label{small-int}
This section studies the regularity of integral closure of small powers of edge ideals. In \cite{CV21}, it was shown that, $\displaystyle \reg\left(\overline{I(G)^{s}}\right) = \reg\left(I(G)^{s}\right)$ for $s \le 4$. We prove a more general result in this section which also generalizes the main result of \cite{CV21}.  Let $G$ be a graph. Recall that  $I(G)$ is normal if and only if $G$ is bow-free. Thus, if $I(G)^s \neq \overline{I(G)^{s}}$ for some $s$, then $G$ must contains a bow.  For a bow $B$ in $G$, the size of a bow is $\dfrac{|V(B)|}{2}$.

First, we obtain a monomial generating set of small integral powers of edge ideals. 
\begin{proposition}\label{int-decomposition}
 If the smallest size of a bow in $G$ is $k+1$, then
$$\displaystyle \overline{I(G)^s}=	\left\{\begin{array}{ll}
		I(G)^s  & \mbox{if } s \leq k \\
		I(G)^s+  \left(m_{B} \; : \; B \text{ is  a bow in } G \text{ of size } k+1\right) & \mbox{if } s = k+1.
	\end{array}
\right.$$
\end{proposition}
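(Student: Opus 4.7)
The plan is to treat the two inclusions separately, using the Newton polyhedron / fractional edge cover description of the integral closure of a monomial ideal.

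\textbf{The forward inclusion.} The containment $I(G)^s \subseteq \overline{I(G)^s}$ is automatic, so only the second generator in the case $s=k+1$ needs checking. Let $B=C\sqcup C'$ be a bow of size $k+1$ with $|V(C)|=2a+1$ and $|V(C')|=2b+1$, so $a+b+1=k+1$. Squaring,
\begin{equation*}
m_B^2 \;=\; m_C^2\, m_{C'}^2 \;=\; \Bigl(\prod_{e \in E(C)} e\Bigr)\Bigl(\prod_{e \in E(C')} e\Bigr) \;\in\; I(G)^{(2a+1)+(2b+1)} \;=\; I(G)^{2(k+1)},
\end{equation*}
which witnesses $m_B \in \overline{I(G)^{k+1}}$. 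Note also that $m_B \notin I(G)^{k+1}$: the degree of $m_B$ forces any expression as a product of $k+1$ edges to use a perfect matching of $B$, which does not exist since $B$ is a disjoint union of two odd cycles and $d_G(C,C')\ge 2$.

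\textbf{The reverse inclusion.} I would pick a minimal monomial generator $m=\mathbf{x}^{\mathbf{a}}$ of $\overline{I(G)^s}$ that does not lie in $I(G)^s$ and produce a bow $B$ of size at most $s$ with $m_B \mid m$. By the Newton polyhedron description, $\mathbf{x}^{\mathbf{a}} \in \overline{I(G)^s}$ iff the LP
\begin{equation*}
\sum_{e \in E(G)} \lambda_e \;=\; s, \qquad \sum_{e \ni v} \lambda_e \;\le\; a_v \ \text{for all } v, \qquad \lambda_e \;\ge\; 0,
\end{equation*}
is feasible over $\mathbb{Q}$, and $\mathbf{x}^{\mathbf{a}} \in I(G)^s$ iff it is feasible over $\mathbb{Z}$. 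By the Edmonds--Pulleyblank description of the $b$-matching polytope, the vertices of the fractional LP are half-integral, and the half-integer part of such a vertex is supported on a disjoint union of induced odd cycles of $G[\supp(m)]$ with no edges between them in $\supp(\lambda)$. A minimality argument on $\supp(m)$ lets me reduce to exactly two such cycles $C,C'$; since $\supp(\lambda)$ contains no edge connecting them, $d_G(C,C')\ge 2$, so $B=C\sqcup C'$ is a bow of size $a+b+1\le s$, and $m_B \mid m$.

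\textbf{Conclusion and main obstacle.} Since the smallest bow in $G$ has size $k+1$, no such obstruction is available for $s\le k$, forcing $\overline{I(G)^s}=I(G)^s$ in that range. For $s=k+1$ the obstruction bow has size exactly $k+1$ and the residual degree vanishes, yielding the claimed equality. The main obstacle is the polyhedral step, namely pinning down the half-integral vertex structure and verifying that the two odd cycles arising as the half-integer support are at graph distance $\ge 2$ rather than being joined by a fractional edge. The cleanest route is to invoke Edmonds' blossom characterization directly together with a minimality reduction on $\supp(m)$; alternatively one can iterate the Simis--Vasconcelos--Villarreal normality criterion on induced subgraphs, using that failure of normality is witnessed precisely by the presence of a bow, to isolate the first level at which a new generator appears and to identify it as an $m_B$.
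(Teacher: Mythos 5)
Your forward inclusion is complete and correct: the identity $m_B^2=\prod_{e\in E(C)}e\cdot\prod_{e\in E(C')}e\in I(G)^{2(k+1)}$ is exactly the right witness, and the parity/distance argument for $m_B\notin I(G)^{k+1}$ is sound (though not actually needed for the stated equality). The problem is the reverse inclusion, which is where all the content of the proposition lives, and which you present as a plan rather than a proof. The LP formulation of membership in $\overline{I(G)^s}$ versus $I(G)^s$ is correct, and half-integrality of the relevant polytope is the right tool, but the step you yourself flag as ``the main obstacle'' is precisely the theorem to be proved: (i) that the half-integral part of a vertex solution is supported on vertex-disjoint odd cycles; (ii) that there are at least two of them (this follows from a parity count: each odd half-cycle on $2a+1$ vertices contributes the non-integer $a+\tfrac12$ to $\sum_e\lambda_e=s$, so their number must be even); (iii) that some pair of them is at distance $\ge 2$ \emph{in $G$}, not merely non-adjacent in $\supp(\lambda)$ --- if an edge of $G$ joins the two cycles, one deletes its endpoints and matches the two remaining even paths to land back in $I(G)^s$, so the monomial is not a new generator; and (iv) a minimality reduction discarding the integral part and all but one such pair. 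None of (i)--(iv) is carried out, and the $s\le k$ case also rests on this unproved machinery, so as written the argument does not establish either case of the proposition.

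By contrast, the paper disposes of the whole question in three lines by quoting Villarreal's description of the normal Rees algebra of an edge ideal, $\overline{\mathcal{R}(I(G)t)}=R[I(G)t]\left[m_Bt^{|V(B)|/2}:B\text{ a bow in }G\right]$ (\cite[Proposition 10.5.12]{villarreal_book}), and then reading off the graded components: since every bow has $t$-degree at least $k+1$, degree $s\le k$ gives $I(G)^s$, and degree $k+1$ gives $I(G)^{k+1}$ plus the $m_B$ for bows of size exactly $k+1$. That citation is exactly the packaged form of the Edmonds/SVV combinatorics you are trying to redo by hand; your closing remark about ``iterating the Simis--Vasconcelos--Villarreal normality criterion'' is the right instinct, and invoking that result (or Proposition 10.5.12 directly) is the clean way to close your gap.
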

\begin{proof}
By \cite[Proposition 10.5.12]{villarreal_book}, we get that $$\displaystyle \overline{\mathcal{R}(I(G)t)}=\bigoplus_{s\geq 0} \overline{I(G)^s} t^s=R[I(G)t]\left[m_{B}t^{\dfrac{|V(B)|}{2}}: \; B \text{ is a bow in } G\right].$$ Let $ s$ be a positive integer. If $s \leq k$, then by comparing the graded components of degree $s$, we get $I(G)^s =\overline{I(G)^s}$. Suppose that $s= k+1$. Then, again by comparing the graded components of degree $k+1$, we have   $$\displaystyle \overline{I(G)^{k+1}}=I(G)^{k+1} +(m_{B}  \; : \; B \text{ is a bow of size } k+1).$$ Hence, the desired result follows.
\end{proof}

Let $I$ be a monomial ideal in $R$. Then, $\partial^*(I)$ denotes the ideal generated by $u/x$, where $u$ is a minimal monomial generator of $I$, and $x$ is a variable dividing $u$.
\begin{lemma}\label{tor-vanishing}
Let $I$ be a non-zero proper monomial ideal in $R$. Then, $\partial^*(\overline{I^{s}}) \subset \overline{I^{s-1}}$ for all $s \geq 1$. In particular, the inclusion map $\overline{I^s} \to \overline{I^{s-1}}$ is a Tor-vanishing map  for all $s\geq 1$, i.e., for all $i \geq 0$,  $\text{Tor}_i^R(\overline{I^s},\sk) \to \text{Tor}_i^R(\overline{I^{s-1}},\sk)$ is zero map.
\end{lemma}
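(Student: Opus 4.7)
The plan is first to prove $\partial^*(\overline{I^s}) \subseteq \overline{I^{s-1}}$ by a direct factorization argument, after which the Tor-vanishing statement follows from the standard principle that a degree-zero graded map with image in $\m N$ induces zero on $\Tor_\bullet(-,\sk)$. Fix a minimal monomial generator $u$ of $\overline{I^s}$ and a variable $x_i$ with $x_i \mid u$; we aim to show $u/x_i \in \overline{I^{s-1}}$.

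Because $I$ is monomial and $u \in \overline{I^s}$, there exists $N \geq 1$ with $u^N \in I^{sN}$, and since $u^N$ is itself a monomial we can factor
\[ u^N = g_1 \cdots g_{sN} \cdot r \]
with each $g_j$ a minimal monomial generator of $I$ (repetitions allowed) and $r$ a monomial of $R$. The first observation is that minimality of $u$ forces $d_r := \deg_{x_i}(r) < N$: otherwise one could cancel $x_i^N$ from $r$ alone to obtain $(u/x_i)^N \in I^{sN}$, contradicting $u/x_i \notin \overline{I^s}$.

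Next, I would perform the cancellation $(u/x_i)^N = u^N / x_i^N$ by removing all $d_r$ copies of $x_i$ from $r$ and the remaining $N - d_r$ copies from generators $g_j$ that contain $x_i$ (this is feasible because the total $x_i$-degree contributed by the $g_j$'s equals $N \deg_{x_i}(u) - d_r \geq N - d_r$). After this redistribution, each $g_j$ touched by the cancellation either stays in $I$ or leaves $I$ entirely, so the number of $g_j$'s lost from $I$ is at most the number of touched $g_j$'s, which is at most $N - d_r$. Therefore $(u/x_i)^N$ is still expressible as a product of at least $(s-1)N + d_r$ elements of $I$ times a monomial in $R$, so $(u/x_i)^N \in I^{(s-1)N}$ and hence $u/x_i \in \overline{I^{s-1}}$. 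This proves $\partial^*(\overline{I^s}) \subseteq \overline{I^{s-1}}$, from which $\overline{I^s} \subseteq \m \cdot \overline{I^{s-1}}$ is immediate, and the Tor-vanishing follows by the standard characterization of such maps via matrix entries in $\m$ for any lift to minimal free resolutions.

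The main subtlety is the counting in the third paragraph: a minimal generator $g_j$ may contain $x_i$ with multiplicity greater than one, so concentrating several $x_i$-cancellations on the same $g_j$ can reduce the number of generators lost. The clean bound of at most $N-d_r$ losses uses only that each touched $g_j$ is lost at most once, regardless of how many $x_i$'s are removed from it, so no delicate choice of redistribution is needed.
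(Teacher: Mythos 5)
Your proof of the containment $\partial^*(\overline{I^s}) \subseteq \overline{I^{s-1}}$ is correct and is essentially the paper's argument: both pick $N$ (the paper's $a$) with $u^N \in I^{sN}$, factor $u^N$ into $sN$ minimal generators of $I$ times a monomial, and observe that dividing by $x_i^N$ can spoil at most $N$ of those $sN$ factors, so $(u/x_i)^N \in I^{(s-1)N}$. Your extra bookkeeping (exhausting the $x_i$'s in the remainder $r$ first, and the remark that $d_r<N$ by minimality) is harmless but unnecessary: if $d_r\geq N$ one gets $u/x_i\in\overline{I^s}\subseteq\overline{I^{s-1}}$ directly, so no case distinction or feasibility count is really needed.

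The genuine flaw is in the justification of the ``in particular'' clause. It is \emph{not} a standard principle that a degree-zero graded map $\phi\colon M\to N$ with $\phi(M)\subseteq \m N$ is Tor-vanishing: over $\sk[x]$ the map $\left(R/(x)\right)(-1)\to R/(x^2)$ sending the generator to $x$ has image $\m\cdot R/(x^2)$, yet any lift to minimal free resolutions is the identity in homological degree one, so it induces an isomorphism on $\Tor_1(-,\sk)$. The inclusion $\overline{I^s}\subseteq \m\,\overline{I^{s-1}}$ that you extract only controls the degree-zero component of a lift, whereas the characterization you invoke requires entries in $\m$ in \emph{every} homological degree. Passing from $\partial^*(\overline{I^s})\subseteq\overline{I^{s-1}}$ to Tor-vanishing is precisely the content of the monomial-ideal-specific result of Nguyen and Vu that the paper cites for this step; its proof uses Taylor complexes and the full $\partial^*$-containment, not merely containment in $\m$ times the target. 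You should cite (or reprove) that result rather than appeal to the false general principle.
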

\begin{proof}
Let $s$ be a positive integer, and let $f \in \partial^*(\overline{I^{s}})$ be a minimal monomial generator. Then $fx \in \overline{I^s}$ for some variable $x$ which implies that $f^a x^a \in I^{sa}$ for some positive integer $a$. Thus, there exist minimal monomial generators  $u_1,\ldots,u_{sa} $ of $I$ and a monomial $v$ in $R$ such that $f^ax^a =u_1 \cdots u_{sa} v$. Then, $f^a = \dfrac{u_1 \cdots u_{sa}v}{x^a} \in I^{sa-a}$ as $x$ cancels from at most $a$ $u_i$'s from the numerator. Consequently, we have  $f \in \overline{I^{s-1}},$ and hence, $\partial^*(\overline{I^{s}}) \subset \overline{I^{s-1}}$. The second assertion follows from \cite[Lemma 4.2 and Proposition 4.4]{NV19}
\end{proof}
In the following, we obtain the regularity formula of integral closure of powers of sum of two ideals when one ideal is normally torsion-free. 
\begin{theorem}\label{reg-tor}
Let $I$ and $J$ be squarefree monomial ideals in $R$ generated in a disjoint set of variables. If $I$ is normally torsion-free, then 
$$\reg(\overline{(I+J)^s}) = \max_{\substack{1 \leq i \leq s-1\\ 1 \leq j \leq s}} \Big\{\reg\left(I^{s-i}\right) +\reg\left(\overline{J^{i}}\right), \reg\left(I^{s-j+1}\right)+\reg\left(\overline{J^j}\right)-1 \Big\}.$$
\end{theorem}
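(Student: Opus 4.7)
The plan is to adapt the proof of the ordinary-powers formula \cite[Theorem 1.1]{NV19} to the integral-closure setting. The key new ingredient is the Tor-vanishing of $\overline{J^j} \hookrightarrow \overline{J^{j-1}}$ established in Lemma \ref{tor-vanishing}, which plays here the exact role that Tor-vanishing of $J^j \hookrightarrow J^{j-1}$ plays in the NV19 argument. Combined with the normality of $I$ (so that $\overline{I^i} = I^i$ for all $i$), this will let us transplant the NV19 formula by replacing $J^j$ with $\overline{J^j}$ throughout, provided we first establish a clean multiplicative decomposition of $\overline{(I+J)^s}$.

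The decomposition to prove is
\begin{align*}
\overline{(I+J)^s} \;=\; \sum_{\substack{a+b=s\\ a,b\ge 0}} I^a\, \overline{J^b}.
\end{align*}
For the nontrivial containment $\subseteq$, take a monomial $u = u_x u_y$, where $u_x$ uses only variables of $I$ and $u_y$ only variables of $J$. Then $u \in \overline{(I+J)^s}$ means $u^k = u_x^k u_y^k \in (I+J)^{sk}$ for some $k$; since $u^k$ is a single monomial and the variables are disjoint, it must lie in some summand $I^a J^{sk-a}$, yielding $u_x^k \in I^a$ and $u_y^k \in J^{sk-a}$. Setting $v_I(u_x) = \sup\{a/k : u_x^k \in I^a\}$ and defining $v_J(u_y)$ analogously, we obtain $v_I(u_x) + v_J(u_y) \ge s$. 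The crucial use of the hypothesis is that squarefreeness and normal torsion-freeness of $I$ force $v_I$ to be integer valued: since $I^a = I^{(a)} = \bigcap_{P \in \ass(I)} P^a$, we have $v_I(u_x) = \min_{P \in \ass(I)} \deg_P(u_x) \in \mathbb{Z}_{\ge 0}$. Setting $a = v_I(u_x)$ then forces $u_y \in \overline{J^{s-a}}$, whence $u \in I^a \overline{J^{s-a}}$.

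With the decomposition in hand, the regularity formula will follow from the NV19 framework: Mayer--Vietoris type short exact sequences of the form
\begin{align*}
0 \to I \cdot \overline{(I+J)^{s-1}} \cap \overline{J^s} \to I \cdot \overline{(I+J)^{s-1}} \oplus \overline{J^s} \to \overline{(I+J)^s} \to 0,
\end{align*}
together with the K\"unneth-type factorization $\reg(I^a \overline{J^b}) = \reg(I^a) + \reg(\overline{J^b})$ valid for ideals in disjoint variable sets, and the Tor-vanishing from Lemma \ref{tor-vanishing} (which ensures that successive short exact sequences do not produce cancellation between neighboring pieces). Tracking indices through the iteration, the terms $\reg(I^{s-i}) + \reg(\overline{J^i})$ for $1 \le i \le s-1$ will come from the direct-sum contributions in the Mayer--Vietoris decomposition, while the terms $\reg(I^{s-j+1}) + \reg(\overline{J^j}) - 1$ will come from the intersection (kernel) contributions, shifted by one homological degree as in Lemma \ref{reg-lem}.

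The main obstacle I expect is the decomposition step: the integer-valuedness of $v_I$ genuinely uses both squarefreeness and normal torsion-freeness, and fails without either assumption. Already for $I = (x^2)$ (normally torsion-free but not squarefree) one finds $xy^5 \in \overline{(I + (y^3))^2}$ while $xy^5 \notin \sum_{a+b=2} I^a (y^3)^b$, showing that squarefreeness cannot be dropped. Once the decomposition is secured, the remainder is a careful transcription of \cite{NV19}'s inductive argument with $\overline{J^j}$ in place of $J^j$, using Lemma \ref{tor-vanishing} as the Tor-vanishing input and the normality $\overline{I^i} = I^i$ to keep the $I$-side as ordinary powers.
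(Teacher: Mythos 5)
Your proposal follows essentially the same route as the paper's proof: the identical decomposition $\overline{(I+J)^s}=\sum_{i=0}^{s} I^{i}\,\overline{J^{s-i}}$, the same Tor-vanishing input (Lemma \ref{tor-vanishing} for $\overline{J^{j}}\to\overline{J^{j-1}}$ together with Tor-vanishing of $I^{j}\to I^{j-1}$), and the same inductive machinery, which the paper simply invokes as \cite[Theorem 5.3]{HHTT}. The only difference is that you prove the decomposition directly via the Newton-polyhedron valuation argument, correctly isolating where squarefreeness and normal torsion-freeness are used, whereas the paper cites it from \cite[Theorem 2.1]{MT}; both steps are sound.
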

\begin{proof}
By \cite[Theorem 2.1]{MT},  $\overline{(I+J)^s}=  \sum_{i=0}^s I^i\overline{J^{s-i}}.$ It follows from \cite[Theorem 4.5]{NV19} that for every $s \geq 1$, ${I^s} \to {I^{s-1}}$ is a Tor-vanishing map, and by Lemma \ref{tor-vanishing}, $\overline{I^s} \to \overline{I^{s-1}}$ is a Tor-vanishing map for all $ s \geq 1$. Now, applying \cite[Theorem 5.3]{HHTT},  we get $$\reg\left( \sum_{i=0}^s I^i\overline{J^{s-i}}\right)= \max_{\substack{1 \leq i \leq s-1\\ 1 \leq j \leq s}} \Big\{\reg\left(I^{s-i}\right) +\reg\left(\overline{J^{i}}\right), \reg\left(I^{s-j+1}\right)+\reg\left(\overline{J^j}\right)-1 \Big\}.$$ Hence, the assertion follows. 
\end{proof}

To prove the main result of this section, we use Theorem \ref{gen}, and to make use of Theorem \ref{gen}, we need the following lemma: 

\begin{lemma}\label{lower-int-lemma}
Let $G$ be a graph that contains at least one bow. Let $B$ be a bow in $G$. Then for all $s\geq 1,$ 
\begin{align*}
\displaystyle \reg\left(\overline{I(G)^{s}}\right) \geq 
2s+\nu(B)+\max\{\reg(I(H_B)),1\} -2.
\end{align*}
\end{lemma}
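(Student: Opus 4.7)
My strategy is to mirror the proofs of Lemma \ref{new-lower-lem} and Lemma \ref{lower-sym-lemma}. The first step is to reduce to the case $G = H_B \sqcup B$. Since $H_B \sqcup B$ is an induced subgraph of $G$, the natural identity $\overline{I(G)^s} \cap \sk[V(H_B \sqcup B)] = \overline{I(H_B \sqcup B)^s}$ (which holds because integral closure of a monomial ideal is detected on monomial submonoids: if $u \in R_T$ satisfies $u^k \in I(G)^{sk}$, then $u^k \in I(G)^{sk} \cap R_T = I(G[T])^{sk}$) together with the base-change invariance of graded Betti numbers under the flat inclusion $\sk[V(H_B \sqcup B)] \hookrightarrow R$ yields $\reg(\overline{I(G)^s}) \geq \reg(\overline{I(H_B \sqcup B)^s})$. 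This is the integral closure analogue of \cite[Corollary 4.3]{BHT}.

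Next I handle the case when $H_B$ is empty: the required bound reduces to $\reg(\overline{I(B)^s}) \geq 2s + \nu(B) - 1$. Writing the bow $B = C \sqcup C'$ as a disjoint union of two induced odd cycles and using that each of $C$, $C'$ is unicyclic, both $I(C)$ and $I(C')$ are normal by Simis--Vasconcelos--Villarreal, so $\overline{I(C)^a} = I(C)^a$ and $\overline{I(C')^a} = I(C')^a$ for every $a$. I intend to apply a regularity formula for integral closures of powers of sums of ideals in disjoint variables (in the same spirit as Theorem \ref{reg-tor}) to the decomposition $I(B) = I(C) + I(C')$. Using the classical bound $\reg(I(C)) \geq \nu(C) + 1$ (H\`a--Van Tuyl), the bound $\reg(I(C')^s) \geq 2s + \nu(C') - 1$ from \cite[Theorem 4.5]{BHT}, and the additivity $\nu(B) = \nu(C) + \nu(C')$, I obtain $\reg(\overline{I(B)^s}) \geq (\nu(C) + 1) + (2s + \nu(C') - 1) - 1 = 2s + \nu(B) - 1$, as desired.

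When $H_B$ is non-empty, I apply the same disjoint-variable regularity estimate to the decomposition $I(H_B \sqcup B) = I(H_B) + I(B)$, which gives
\[
\reg\bigl(\overline{I(H_B \sqcup B)^s}\bigr) \geq \reg\bigl(I(H_B)\bigr) + \reg\bigl(\overline{I(B)^s}\bigr) - 1 \geq 2s + \nu(B) + \reg\bigl(I(H_B)\bigr) - 2,
\]
where the second inequality uses the bow case from the previous paragraph. Combined with the induced subgraph reduction and the convention $\max\{\reg(I(H_B)), 1\}$ (which handles the edge case when $H_B$ has no edges and $\reg(I(H_B)) = -\infty$), this produces the stated bound.

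The principal obstacle is justifying the key inequality $\reg(\overline{(I+J)^s}) \geq \reg(I) + \reg(\overline{J^s}) - 1$ for squarefree monomial ideals $I, J$ in disjoint variables when neither is normally torsion-free; Theorem \ref{reg-tor} only covers the case where $I$ is normally torsion-free, whereas here $I(H_B)$ need not be bipartite and $I(B)$ is manifestly not normally torsion-free. I would attack this via a K\"unneth-type argument: the $R$-submodule $I \cdot \overline{J^s} \subseteq \overline{(I+J)^s}$ is isomorphic to the external tensor product $I \otimes_{\sk} \overline{J^s}$ as an $R$-module, hence $\reg(I \cdot \overline{J^s}) = \reg(I) + \reg(\overline{J^s})$ by the Betti-number K\"unneth formula. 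Coupling this with the Tor-vanishing of the integral closure chain from Lemma \ref{tor-vanishing} (applied to both $\overline{I^s}$ and $\overline{J^s}$) and a careful comparison of multigraded Betti tables through the short exact sequence relating $I \cdot \overline{J^s}$ with $\overline{(I+J)^s}$ should produce the required lower bound. This will be the technically most delicate step of the proof.
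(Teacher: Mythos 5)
Your overall skeleton --- restrict to an induced subgraph, split off a piece supported on disjoint variables, and invoke a mixed-sum regularity bound --- is the same as the paper's, and your reduction step and the numerical bookkeeping at the end are fine. The genuine gap is exactly the step you flag as ``most delicate'': you need $\reg\bigl(\overline{(I+J)^s}\bigr) \geq \reg(I)+\reg\bigl(\overline{J^s}\bigr)-1$ with $J=I(B)$ (or $J=I(C')$ in the bow case), and neither $I(B)$ nor $I(C)$, $I(C')$ is normally torsion-free (an odd cycle's edge ideal is normal but has $I^{(s)}\neq I^s$ for large $s$, since the graph is non-bipartite), so Theorem \ref{reg-tor} does not apply to any of the decompositions $I(H_B)+I(B)$ or $I(C)+I(C')$ you propose. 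The K\"unneth sketch you offer is not a proof: the K\"unneth formula computes $\reg(I\cdot\overline{J^s})$, but passing from the submodule $I\cdot\overline{J^s}$ to the whole mixed sum $\overline{(I+J)^s}=\sum_i \overline{I^i}\,\overline{J^{s-i}}$ requires controlling cancellation in the long exact sequences of the filtration, which is precisely what the Tor-vanishing hypotheses in \cite[Theorem 5.3]{HHTT} are for. Without carrying that out (including justifying the decomposition of $\overline{(I+J)^s}$ for ideals that are not normally torsion-free), the lower bound is not established.

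The paper sidesteps this entirely with a small but decisive change of induced subgraph: instead of $H_B\sqcup B$, it takes $H_B\sqcup\{e_1,\ldots,e_{\nu(B)}\}$, where $e_1,\ldots,e_{\nu(B)}$ is an induced matching of $B$. This is still an induced subgraph of $G$, so the restriction inequality for integral closures (\cite[Theorem 3.6]{BCDMS}) applies; and the matching ideal $(e_1,\ldots,e_{\nu(B)})$ is the edge ideal of a bipartite graph, hence normally torsion-free, so Theorem \ref{reg-tor} applies verbatim with $I=(e_1,\ldots,e_{\nu(B)})$ and $J=I(H_B)$. Taking $j=1$ in that formula and using $\reg\bigl((e_1,\ldots,e_{\nu(B)})^s\bigr)=2s+\nu(B)-1$ from \cite[Lemma 4.4]{BHT} gives the stated bound at once, with the empty-$H_B$ case handled separately. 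If you want to keep your route through $I(B)$ itself, you would first have to prove a version of Theorem \ref{reg-tor} with both filtrations replaced by integral closures (using Lemma \ref{tor-vanishing} on both sides); as written, your argument does not close.
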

\begin{proof}
Let $e_1,\ldots,e_{\nu(B)}$ be an induced matching of $B$. Then,  $H_B\sqcup \{e_1,\ldots,e_{\nu(B)}\}$ is an induced subgraph of $G$.  If $H_B$ is an empty graph, then for all $s\geq 1$, $$\overline{I(H_B\sqcup \{e_1,\ldots,e_{\nu(B)}\})^s}=\overline{( e_1,\ldots,e_{\nu(B)})^s}=(e_1,\ldots,e_{\nu(B)})^s.$$    Thus,  by \cite[Lemma 4.4]{BHT},  $$\reg\left(\overline{I(H_B\sqcup \{e_1,\ldots,e_{\nu(B)}\})^s}\right)=2s+\nu(B)-1.$$   

Assume now that $H_B$ is a non-empty graph. Since $(e_1,\ldots,e_{\nu(B)})$ is a normally torsion-free square-free monomial ideal, by Theorem \ref{reg-tor}, \begin{align*}
&\reg\left(\overline{I(H_B\sqcup \{e_1,\ldots,e_{\nu(B)}\})^{s}}\right)=\reg\left(\overline{(I(H_B)+(e_1,\ldots,e_{\nu(B)}))^{s}}\right)\\&= \max_{\substack{1 \leq i \leq s-1\\ 1 \leq j \leq s}} \Big\{\reg\left((e_1,\ldots,e_{\nu(B)})^{s-i}\right)+\reg\left(\overline{I(H_B)^{i}}\right),\reg\left((e_1,\ldots,e_{\nu(B)})^{s-j+1}\right)+ \reg\left(\overline{I(H_B)^{j}}\right)-1 \Big\}\\& \geq    \reg\left(I(H_B)\right)+\reg\left((e_1,\ldots,e_{\nu(B)})^s\right)-1\\&= 2s+\nu(B)+\reg(I(H_B))-2, 
\end{align*} where the inequality follows by using $j=1$ in the previous equality, and the last equality follows from \cite[Lemma 4.4]{BHT}.
The rest follows from  \cite[Theorem 3.6]{BCDMS}.
\end{proof}

It follows from Proposition \ref{int-decomposition} that $\reg\left(\overline{I(G)^s}\right)=\reg(I(G)^s)$ for $s \leq k$ if the smallest size of a bow in $G$ is $k+1$. In the following, we prove the same for $s=k+1.$ 
\begin{theorem}\label{k+1}
 If the smallest size of a bow in $G$ is $k+1$, then $$\displaystyle \reg\left(\overline{I(G)^{k+1}}\right) = \reg\left(I(G)^{k+1}\right).$$ 
\end{theorem}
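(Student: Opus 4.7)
The plan is to apply Theorem \ref{gen} to the decomposition provided by Proposition \ref{int-decomposition}. That proposition gives
$$\overline{I(G)^{k+1}} = I(G)^{k+1} + (m_{B_1}, \ldots, m_{B_r}),$$
where $B_1, \ldots, B_r$ enumerate all bows of size $k+1$ in $G$. I would set $T_i = V(B_i)$ and invoke Theorem \ref{gen} with its integer $k$ replaced by $k+1$. The conclusion of that theorem then reads $\reg(\overline{I(G)^{k+1}}) = \reg(I(G)^{k+1})$, so the entire task reduces to verifying the four hypotheses of Theorem \ref{gen}.

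Hypotheses (1), (3) and (4) should be quick. Each $B_i$ is a disjoint union of two odd cycles, so it has no isolated vertex, giving (1). Picking one edge from each odd cycle of $B_i$ produces an induced matching of size two (the two cycles are at distance at least two, so no edge of $G$ connects them), hence $\nu(B_i) \geq 2$ and $|V(B_i)| = 2(k+1) \leq 2(k+1) + \nu(B_i) - 2$, which is (3). Condition (4) is exactly the content of Lemma \ref{lower-int-lemma} applied with $s = k+1$ and $B = B_i$, in view of the identification of $\overline{I(G)^{k+1}}$ above.

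The main obstacle will be checking hypothesis (2), which has two parts. For the non-membership $m_{B_i} \notin I(G)^{k+1}$, I would use a perfect-matching argument: since $m_{B_i}$ is squarefree of degree $2(k+1)$ and $2(k+1)$ is the minimum degree of any element of $I(G)^{k+1}$, a containment would force $m_{B_i}$ to factor as a product of exactly $k+1$ edges of $G$ that together cover $V(B_i)$ without repetition; as $B_i$ is an induced subgraph of $G$, these edges must all lie in $B_i$, giving a perfect matching of a disjoint union of two odd cycles, which does not exist. For the membership $xm_{B_i} \in I(G)^{k+1}$ for every $x \in W_{B_i}=N_G(V(B_i))$, I would exhibit an explicit factorization. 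Labeling the two cycles of $B_i$ as $C = v_0 v_1 \cdots v_{2a}$ and $C' = u_0 u_1 \cdots u_{2b}$ with $a + b = k$, and assuming after relabeling that $x$ is adjacent to $v_0$, one writes
$$xm_{B_i} \;=\; (xv_0)(v_1 v_2) \cdots (v_{2a-1} v_{2a}) (u_0 u_1) \cdots (u_{2b-2} u_{2b-1}) \cdot u_{2b},$$
exhibiting $1 + a + b = k+1$ edges of $G$ times the leftover vertex $u_{2b}$; a minor variant handles the case $x \in V(B_i)$, where $x$ equals some cycle vertex $v_j$ and the two cycle-edges incident to $v_j$ absorb the resulting square. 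Once condition (2) is in hand, Theorem \ref{gen} closes the argument immediately.
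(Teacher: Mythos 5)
Your proposal is correct and follows essentially the same route as the paper: decompose $\overline{I(G)^{k+1}}$ via Proposition \ref{int-decomposition}, verify the four hypotheses of Theorem \ref{gen} with $T_i=V(B_i)$ (using Lemma \ref{lower-int-lemma} for hypothesis (4)), and conclude. Your verification of hypothesis (2) is more explicit than the paper's one-line justifications, but the underlying ideas (odd parity obstructing a factorization of $m_{B_i}$ into $k+1$ edges, and the explicit edge factorization of $xm_{B_i}$) are the same.
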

\begin{proof}
Let $B_1,\ldots, B_r$ be bows of size $k+1$ in $G$.  Therefore, by Proposition \ref{int-decomposition}, $\overline{I(G)^{k+1}}= I(G)^{k+1}+\left(m_{B_i} \; : \; 1 \leq i \leq r \right).$  Since $B_i$ is a bow, there exists two odd cycles $C_i$ and $C'_i$ such that $d(C_i,C'_i) \geq 2$. Therefore,  $|B_i|=2k+2 \leq 2k+2+\nu(B_i)-2$ as $\nu(B_i) =\nu(C_i)+\nu(C'_i) \geq 2.$ Also, note that   $m_{B_i} =m_{C_i}m_{C'_i}  \notin I(G)^{k+1}$ as the distance between $C_i$ and $C'_i$ is at least two. Let $x \in W_{B_i}$ be any. Then, $x \in N_G(C_i)$ or $x \in N_G(C'_i)$ which implies that  $xm_{B_i}=xm_{C_i}m_{C'_i} \in I(G)^{k+1}$. Next, by Lemma \ref{lower-int-lemma}, we have $\reg\left(\overline{I(G)^{k+1}}\right) \geq 
2k+\nu(B_i)+\max\{\reg(I(H_{B_i})),1\} .$ Hence, the assertion follows by Theorem \ref{gen}.
\end{proof}

\section{Regularity of integral closure of  powers of edge ideals of  bicyclic graphs}\label{bicyclic-int}
In this section, we study the regularity of integral closure of powers of edge ideals of bicyclic graphs. As mentioned in the introduction, one should first consider the class of odd bicyclic graphs to study the regularity function $\reg\left( \overline{I(G)^s}\right)$ for all $s \ge 1.$ In Theorem  \ref{mainTheorem}, we prove that the function $\reg\left( \overline{I(G)^s}\right)$ coincides with the regularity function of powers edge ideal when $G$ is an  odd bicyclic graph. Recall that a graph is said to be an {\it odd bicyclic graph} if it has exactly two odd cycles. 

Let $G$ be an odd bicyclic graph with odd  cycles $C_1$ and $C_2$. If the distance between the odd cycles is at most one, then $G$ is bow-free; hence, $I(G)$ is normal. Thus, we assume that $d_G(C_1,C_2) \geq 2$. Then $G$ has exactly one bow $C_1 \cup C_2$, say $B$. Also, if $C_1$ has size $2m+1$ and $C_2$ has size $2n+1$ with $m \leq n$, then the size of $B$ is $m+n+1.$

We now obtain a monomial generating set of integral closure of powers of edge ideals of odd bicyclic graphs.
\begin{proposition}\label{decomposition}
	Let $G$ be an odd bicyclic graph with the only bow $B$. Then,
$$\displaystyle \overline{I(G)^s}=	\left\{\begin{array}{ll}
		I(G)^s  & \mbox{if } s \leq m+n \\
		I(G)^s+ m_{B} I(G)^{s-m-n-1} & \mbox{if } s \geq m+n+1.
	\end{array}
\right.$$
\end{proposition}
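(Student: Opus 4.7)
The plan is to start from the description of the normalized Rees algebra used in the proof of Proposition~\ref{int-decomposition} and extract its degree-$s$ component. Since $G$ is odd bicyclic with its two odd cycles $C_1, C_2$ at distance at least two, $G$ has exactly one bow $B = C_1 \cup C_2$, of size $m+n+1$. Consequently \cite[Proposition~10.5.12]{villarreal_book} gives
\[
\overline{\mathcal{R}(I(G)t)} = R[I(G)t]\bigl[m_B t^{m+n+1}\bigr],
\]
and comparing degree-$s$ components in $t$ yields
\[
\overline{I(G)^s} = \sum_{k \ge 0} m_B^k \, I(G)^{s-k(m+n+1)},
\]
with the convention $I(G)^j = 0$ for $j<0$. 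The case $s \le m+n$ is then immediate, since only the $k=0$ summand contributes.

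The remaining case $s \ge m+n+1$ reduces to showing that every term with $k \ge 2$ is already contained in $I(G)^s + m_B I(G)^{s-m-n-1}$. The key identity I would use is that a double copy of the vertex monomial of an odd cycle lies in a power of $I(G)$: writing $C_1 = x_1 x_2 \cdots x_{2m+1} x_1$, the telescoping
\[
m_{C_1}^2 = (x_1x_2)(x_2x_3)\cdots(x_{2m}x_{2m+1})(x_{2m+1}x_1)
\]
exhibits $m_{C_1}^2$ as a product of $2m+1$ edges of $G$, so $m_{C_1}^2 \in I(G)^{2m+1}$; similarly $m_{C_2}^2 \in I(G)^{2n+1}$. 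Multiplying gives $m_B^2 \in I(G)^{2(m+n+1)}$.

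From this a short parity argument finishes the proof. For even $k = 2j \ge 2$, one has $m_B^{2j} = (m_B^2)^j \in I(G)^{2j(m+n+1)}$, hence $m_B^k I(G)^{s-k(m+n+1)} \subseteq I(G)^s$. For odd $k = 2j+1 \ge 3$, one has $m_B^{2j+1} = m_B (m_B^2)^j \in m_B I(G)^{2j(m+n+1)}$, hence $m_B^k I(G)^{s-k(m+n+1)} \subseteq m_B I(G)^{s-(m+n+1)}$. Thus for $s \ge m+n+1$ the only new contributions come from $k=0$ and $k=1$, yielding $\overline{I(G)^s} = I(G)^s + m_B I(G)^{s-m-n-1}$. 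I do not anticipate a serious obstacle: the whole argument hinges on the telescoping identity for $m_{C_i}^2$ on an odd cycle, which collapses all higher powers of $m_B$ into the two displayed terms; everything else is straightforward bookkeeping with Villarreal's description of the normalization.
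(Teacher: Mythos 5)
Your proposal is correct and follows essentially the same route as the paper: Villarreal's description of the normalized Rees algebra as $R[I(G)t][m_B t^{m+n+1}]$, extraction of the degree-$s$ component as $\sum_l m_B^l I(G)^{s-l(m+n+1)}$, and the telescoping identity $m_{C_i}^2 \in I(G)^{|V(C_i)|}$ to absorb all terms with $l \ge 2$. The only difference is that you make the even/odd case split for $m_B^k$ explicit, which the paper leaves as an immediate consequence of $m_B^2 \in I(G)^{2m+2n+2}$.
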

\begin{proof}
	Since $B$ is the only  bow in $G$ and the size of the bow is $m+n+1$, by \cite[Proposition 10.5.12]{villarreal_book}, we get that $\displaystyle \overline{\mathcal{R}(I(G)t)}=R\left[I(G)t, m_{B}t^{m+n+1}\right]$.	 Let $ s$ be a positive integer. Write  $s=q(m+n+1)+r$ with $r \leq m+n$ for some  $q,r \in \mathbb{N} \cup \{0\}.$ By comparing the graded components of degree s, we have   $$\displaystyle \overline{I(G)^s}=\sum\limits_{l=0}^q m_{B}^lI(G)^{s-(m+n+1)l}.$$ 
	Now, if $ s \leq m+n$, then  $\displaystyle \overline{I(G)^s}=I(G)^s$. So, we assume that $s \geq m+n+1$. Let $V(C_1)= \{x_{i_1},\ldots,x_{i_{2m+1}}\}$. Then,  $\displaystyle m_{C_1}=x_{i_1}\cdots x_{i_{2m+1}}$. Observe that  $$\displaystyle m_{C_1}^2=(x_{i_1}x_{i_2})\cdots(x_{i_{2m-1}} x_{i_{2m}})(x_{i_{2m+1}}x_{i_{1}})\cdots  (x_{i_{2m}}x_{i_{2m+1}})\in I(G)^{2m+1}.$$ Similarly,  $\displaystyle m_{C_2}^2 \in I(G)^{2n+1}$, and hence,  $\displaystyle m_{B}^2\in I(G)^{2m+2n+2}$. Consequently,  $$\displaystyle m_{B}^lI(G)^{s-(m+n+1)l} \subset I(G)^s+ m_{B} I(G)^{s-m-n-1}$$ for $ 2 \leq l \leq q$ which completes the proof.
\end{proof}

It follows from Theorem \ref{k+1} that $\reg\left(\overline{I(G)^s}\right)=\reg(I(G)^s)$ for $s \leq m+n+1$. 
Now, we move on to prove that $\reg\left(\overline{I(G)^s}\right)=\reg\left(I(G)^s\right)$ for $s > m+n+1$, and to prove this we first compute colon ideals of the form $I(G)^s:m_{B}u_j$, where $s> m+n+1$ and $u_j$ is an element of the  minimal generating set of $I(G)^{s-m-n-1}.$
\begin{lemma}\label{quadraticLemma}
	Let $G$ be an odd bicyclic graph with the only bow $B$.   For $s > m+n+1$, let $\{u_1,\ldots,u_r\}$ be a minimal generating set of $I(G)^{s-m-n-1}$. If $m_{B}u_j \notin I(G)^{s}$, then $I(G)^s:m_{B}u_j$ is minimally generated  by  monomials of degree at most two.
\end{lemma}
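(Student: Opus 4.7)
The plan is to use Banerjee's even-connection formula \cite[Theorem 6.7]{AB}, exactly as in the proof of Lemma \ref{short-lemma}, but first we will rewrite $m_B u_j$ as a product of exactly $s-1$ edges of $G$ multiplied by a degree-$2$ monomial; then the colon becomes a colon of a degree-$2$ generated ideal by a degree-$2$ monomial, which is automatically generated in degrees $\leq 2$.

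First I would decompose $m_B$ into edges plus two leftover vertices. Since $C_1$ is an odd cycle with $2m+1$ vertices, any vertex $v_1 \in V(C_1)$ is the unique leftover in a (unique) perfect matching of $C_1 \setminus \{v_1\}$ by $m$ edges $f_1,\dots,f_m$ of $C_1$ (the path on $2m$ vertices has a unique perfect matching). Hence $m_{C_1} = v_1 f_1\cdots f_m$, and similarly $m_{C_2} = v_2 g_1\cdots g_n$ for some $v_2 \in V(C_2)$ and edges $g_1,\dots,g_n$ of $C_2$. Because $u_j$ is a minimal generator of $I(G)^{s-m-n-1}$, we may write $u_j = e_1 \cdots e_{s-m-n-1}$ as a product of edges of $G$. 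Setting $w := f_1\cdots f_m \, g_1\cdots g_n \, e_1\cdots e_{s-m-n-1}$, a product of exactly $s-1$ edges of $G$, we obtain the clean factorization
\[
m_B u_j = v_1 v_2 \cdot w.
\]

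Now I would apply \cite[Theorem 6.7]{AB} to $w$ to get
\[
L := I(G)^s : w = I(G) + \bigl( uv : u \text{ is even-connected to } v \text{ with respect to } w \bigr),
\]
which is a monomial ideal generated in degree $2$. From the factorization $m_B u_j = v_1 v_2 \cdot w$ it follows that
\[
I(G)^s : m_B u_j = L : v_1 v_2.
\]
Since $L$ is generated by monomials of degree at most $2$ and we are colon-dividing by a monomial, the standard description of a monomial colon shows $L : v_1 v_2$ is generated by monomials of degree at most $2$. The hypothesis $m_B u_j \notin I(G)^s$ is equivalent to $v_1 v_2 \notin L$, which ensures the colon is a proper ideal. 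I do not anticipate a real obstacle here: the only non-routine ingredient is the odd-cycle matching observation (removing any vertex from an odd cycle leaves an even path with a perfect matching), which is what allows the degree of the ``leftover'' to drop all the way to two and lets Banerjee's theorem be applied just once, at the level of $s-1$ edges.
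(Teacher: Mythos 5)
Your proof is correct and is essentially the paper's own argument: the paper likewise picks one vertex $x\in V(C_1)$ and $y\in V(C_2)$, uses that $m_{C_1}/x\in I(G)^m$ and $m_{C_2}/y\in I(G)^n$ to write $m_Bu_j/(xy)$ as a product of $s-1$ edges, applies Banerjee's theorem to get degree-$2$ generation of $I(G)^s:(m_Bu_j/(xy))$, and then colons by $xy$. Your explicit perfect-matching justification of $m_{C_1}=v_1f_1\cdots f_m$ and your use of the even-connection description (Theorem 6.7) in place of the bare degree-$2$ statement (Theorem 6.1) are only cosmetic differences.
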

\begin{proof}
	Let $x \in V(C_1)$ and $y \in V(C_2)$. Then,  $\displaystyle I(G)^{s}:m_{B}u_j =\left(I(G)^{s}:\frac{m_{B}u_j}{xy}\right):xy$. Since $\displaystyle \frac{m_{C_1}}{x} \in I(G)^m$ and $\displaystyle \frac{m_{C_2}}{y} \in I(G)^n$,  we get that $\displaystyle \frac{m_{B}}{xy}u_j \in I(G)^{s-1}$ and it is a monomial generator of $I(G)^{s-1}$. It follows from \cite[Theorem 6.1]{AB} that $\displaystyle I(G)^{s}:\frac{m_{B}u_j}{xy}$ is minimally generated by monomials of degree $2$. Thus, $\displaystyle \left(I(G)^{s}:\frac{m_{B}u_j}{xy}\right):xy$ is minimally generated by monomials of degree at most two. Hence, the assertion follows.
\end{proof}
In the following notation, we set an ordering of the minimal monomial generators of powers of edge ideals which plays an essential role in the proof of Theorem \ref{mainTheorem}. 
\begin{notation}\label{notationClosure}
	Let $G$ be an odd bicyclic graph with the only bow $B$. Then, for $ s> m+n+1$, by Proposition \ref{decomposition}, we know that $\overline{I(G)^s}=I(G)^s+m_{B}I(G)^{s-m-n-1}$.  Let $I(G)^{s-m-n-1}=( u_1,\dots, u_r) $.   By \cite[Theorem 4.12]{AB}, we assume that the ordering $u_1,\dots, u_r$ satisfies the following:  for every pair of integers $1 \leq 
	i < j \leq r$, either $(u_i : u_j) \subset  I(G)^{s-m-n} : u_j$ or  there exists an integer $k \leq i -1$ such that $(u_k : u_j)$ is generated by a
	variable, and $(u_i
	: u_j) \subset (u_k : u_j).$ Set $I_0=I(G)^s$, and for $1\leq j\leq r$, $I_j=I(G)^s+m_{B} ( u_1,\dots , u_j)$. Note that $I_r= \overline{I(G)^s}.$
\end{notation}
The next two lemmas are the most important technical part of Theorem \ref{mainTheorem}. In these lemmas, we understand the structure of ideals of the form $I_{j-1}:m_{B}u_j$, where $s> m+n+1$ and $u_j$ is an element of the  minimal generating set of $I(G)^{s-m-n-1}.$  
\begin{lemma}\label{tech2}
    Let $G$ be as in Notation \ref{notationClosure} and $u=m_{B}u_{j}$ for some $ 1 \leq j \leq r$ such that $u \notin I(G)^s$. Write $\displaystyle u_{j}=f_{1,j}\cdots f_{s-m-n-1,j}$  such that $\displaystyle f_{1,j},\dots, f_{t_{j},j}\in  E(H_B) $ and $\displaystyle f_{t_{j}+1,j},\dots, f_{s-m-n-1,j}\in E(G) \setminus E(H_B)$ for some $0 \leq t_{j}\leq s-m-n-1$.  Then $$\displaystyle I(G)^s:u  =L_{j}+I(H_B)^{t_{j}+1}:f_{1,j}\cdots f_{t_{j},j},$$ where $ L_{j}$ is an ideal generated by a subset of variables and $(w  : w \in W_B) \subset L_{j}$.
\end{lemma}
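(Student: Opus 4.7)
The plan is to prove the two inclusions, taking $L_j$ to be the ideal generated by all variables $v$ with $vu\in I(G)^s$. By construction, $L_j$ is generated by a subset of variables and $L_j\subseteq I(G)^s:u$. To check $W_B\subseteq L_j$: for $w\in W_B=N_G(V(B))$, pick a neighbor $v\in V(C_1)\cup V(C_2)$, say $v\in V(C_1)$; since $C_1\setminus\{v\}$ is a path on $2m$ vertices it factors as a product of $m$ edges of $C_1$, giving $wm_{C_1}=(wv)(m_{C_1}/v)\in I(G)^{m+1}$. Likewise $m_{C_2}$ decomposes as $n$ edges of $C_2$ times a leftover vertex $z\in V(C_2)$; combined with $u_j\in I(G)^{s-m-n-1}$, this yields $wu\in I(G)^s$. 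For the second summand, if $p\in I(H_B)^{t_j+1}:f_{1,j}\cdots f_{t_j,j}$, then $pf_{1,j}\cdots f_{t_j,j}\in I(H_B)^{t_j+1}\subseteq I(G)^{t_j+1}$, and combining with $m_B\in I(G)^{m+n}$ and $f_{t_j+1,j}\cdots f_{s-m-n-1,j}\in I(G)^{s-m-n-1-t_j}$ gives $pu\in I(G)^s$. This proves the $\supseteq$ inclusion.

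For $\subseteq$, Lemma \ref{quadraticLemma} restricts attention to generators of degree at most two. Variables lie in $L_j$ by definition. Fix a degree-two minimal generator $xy\notin L_j$; minimality forces $x,y\notin L_j$, hence $x,y\in V(H_B)$ since $W_B\subseteq L_j$. Write $u=u''z_1z_2$ where $u''=e_1'\cdots e_m'e_1''\cdots e_n''u_j$ is a product of $s-1$ edges (taking cycle-matching decompositions of $m_{C_1},m_{C_2}$ with leftover vertices $z_i\in V(C_i)$). Then $xyu\in I(G)^s$ becomes $xyz_1z_2\in I(G)^s:u''=I(G)+(\text{even-connected pairs w.r.t.\ }u'')$ by \cite[Theorem 6.7]{AB}. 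Since $xyz_1z_2$ is squarefree of degree four, some quadratic divisor lies in this ideal. The divisor $z_1z_2$ is neither an edge ($d(C_1,C_2)\ge 2$) nor even-connected (else $u\in I(G)^s$, contradicting the hypothesis). Each of $xz_1,xz_2,yz_1,yz_2$ fails to be an edge (since $x,y\notin W_B$), and being an even-connected pair (say $xz_1$) would give $xu=xz_1u''\cdot z_2\in I(G)^s$, forcing $x\in L_j$, a contradiction; the other three are analogous. So either $xy\in E(G)$, whence $xy\in E(H_B)$ and $xy\in I(H_B)\subseteq I(H_B)^{t_j+1}:f_{1,j}\cdots f_{t_j,j}$, or $xy$ is an even-connected pair w.r.t. $u''$ in $G$.

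The main obstacle is this last case: one must show that an even-connection walk $x=p_0,p_1,\ldots,p_{2k+1}=y$ in $G$ with respect to $u''$ can be taken inside $V(H_B)$, using only the $H_B$-edges $f_{1,j},\ldots,f_{t_j,j}$ of $u''$; Banerjee's theorem applied to $H_B$ then places $xy$ in $I(H_B)^{t_j+1}:f_{1,j}\cdots f_{t_j,j}$. My plan is a proof by contradiction, exploiting the odd-cycle structure: suppose the walk exits $V(H_B)$ and let $p_i\in W_B$ be the first excursion; the absence of $V(B)$-$V(H_B)$ edges forces $p_i\in W_B\setminus V(B)$, and a parity-corrected truncation yields an even-connection of $x$ (or $y$) with some $w\in W_B$ w.r.t. $u''$, so $wxu\in I(G)^s$. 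Using that $w$ is adjacent to a vertex of some $V(C_i)$ together with the odd-cycle absorption $wm_{C_i}\in I(G)^{m+1}$ (resp.~$n+1$), one rearranges the resulting $s$-edge decomposition of $wxu$ so that $w$ falls into the leftover; this places all $s$ edges inside $xu$, i.e.\ $xu\in I(G)^s$ and $x\in L_j$, the desired contradiction. A symmetric argument handles excursions closer to $y$. Consequently the walk must remain in $V(H_B)$ and use $u''$-edges only from $\{f_{1,j},\ldots,f_{t_j,j}\}$, which gives the required containment and completes the proof.
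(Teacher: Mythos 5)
Your proposal follows the paper's architecture up to the decisive step: both arguments prove the easy inclusion, invoke Lemma \ref{quadraticLemma} to reduce to generators of degree at most two, eliminate the divisors $z_1z_2$, $xz_1$, $xz_2$, $yz_1$, $yz_2$ by the same reasoning, and then must show that the even-connection walk joining $x,y\in V(H_B)$ can be taken inside $H_B$ using only the edges $f_{1,j},\dots,f_{t_j,j}$. The gap is in your contradiction step for a walk that exits $V(H_B)$. After even-connecting $x$ to some $w\in W_B$ with respect to $u''$, all you know is that $xwu''$ equals a product of $s$ edges; the edge of that decomposition containing $w$ is $wv$, where $v$ is the walk-neighbour of $w$, and deleting it leaves $s-1$ edges together with the stray vertex $v$. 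To get $xu=xu''z_1z_2\in I(G)^s$ you would need $vz_1z_2$ to contain an edge, and nothing forces that: the adjacency you invoke ($w\in N_G(V(C_i))$ and $wm_{C_i}\in I(G)^{m+1}$) concerns $w$, which has just been removed from the decomposition, not $v$. So the claimed ``rearrangement so that $w$ falls into the leftover'' is unjustified, and I do not see how to carry it out in this form. There is also a parity problem you gloss over: if the first exit index of the walk is even, truncation at that vertex gives no even-connection at all, so ``parity-corrected truncation'' needs an actual mechanism.

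The paper resolves both difficulties at once by never stopping at $w$: it continues the walk from $w$ through its neighbour on the odd cycle $C_i$ and around that cycle to the deleted vertex $z_i$, choosing whichever of the two directions makes the total length odd (this is exactly where oddness of the cycle enters). That produces $xz_i\in I(G)^s:u''$, hence $x\in I(G)^s:u$ --- precisely the implication you already proved and used when ruling out the divisors $xz_1$ and $xz_2$. Rerouting your contradiction through that extension closes the gap. A minor further point: Lemma \ref{quadraticLemma} allows degree-two generators of the form $y^2$ (a vertex even-connected to itself), so you cannot assume $xyz_1z_2$ is squarefree, though this does not change the shape of the argument.
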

\begin{remark}
If $H_B$ is an empty graph, then the ideal  $I(H_B)^{t_{j}+1}:f_{1,j}\cdots f_{t_{j},j}$  in the above lemma is a zero ideal. Also, if $H_B$ is a non-empty graph and $t_j=0$, then    $I(H_B)^{t_{j}+1}:f_{1,j}\cdots f_{t_{j},j}$  in the above lemma is $I(H_B)$.
\end{remark}
  \begin{proof}
  Since $zm_{B}\in I(G)^{m+n+1}$  for  $z\in W_B$, we get that $zm_{B}u_{j}=zu\in I(G)^s$.   Therefore, $(w  : w \in W_B) \subset I(G)^s:u$. Let $g \in I(G)^s :u$ be a minimal monomial generator. By Lemma \ref{quadraticLemma}, we know that either $g$ is a variable or $g=y_1y_2$ for some $y_1,y_2 \in V(G)$ $(y_1$ may be equal to $y_2)$. If $g$ is a variable, then $g \in L_{j}$. Therefore, we assume that $g=y_1y_2$. Suppose that $g=y_1y_2 \in I(G)$. We claim that $g=y_1y_2 \in I(H_B) \subset I(H_B)^{t_{j}+1}:f_{1,j} \cdots f_{t_{j},j}$. Let if possible $y_1y_2 \notin I(H_B)$. Then either $y_1 \in W_B$ or $y_2 \in W_B$ which implies that $y_1 \in I(G)^s: u$ or $y_2 \in I(G)^s:u$. In either case, we are contradictory to the fact that $g=y_1y_2$ is a minimal monomial generator of $I(G)^s:u.$ Thus, the claim follows. Assume now that $g=y_1 y_2 \notin I(G).$ 
  Let $x \in V(C_1)$ and $y \in V(C_2)$. Then,  $$\displaystyle I(G)^s:u=I(G)^{s}:m_{B}u_{j} =\left(I(G)^{s}:\frac{m_{B}u_{j}}{xy}\right):xy.$$ Therefore, $\displaystyle gxy \in I(G)^{s}:\frac{m_{B}u_{j}}{xy}$.   Since  $ u \notin I(G)^s$, we get $\displaystyle xy \notin I(G)^s :\frac{m_{B}u_{j}}{xy} $.   Observe that if $\displaystyle y_ix \;(\text{ or } y_iy) \in I(G)^s: \frac{m_{B}u_{j}}{xy}$, then $y_i \in I(G)^s:u$ which is a contradiction to the fact that $g=y_1y_2$ is a minimal monomial generator of $I(G)^s:u$.  Therefore, we get that $\displaystyle xy,y_1x,y_2x,y_1y,y_2y \notin I(G)^s :\frac{m_{B}u_{j}}{xy} $, and hence, $\displaystyle y_1y_2 \in  I(G)^s :\frac{m_{B}u_{j}}{xy} $ is a minimal monomial generator. Let $f_{s-m-n,j}, \ldots, f_{s-n-1,j}$ denote the edges of $C_{2m+1}\setminus \{x\}$ such that $\dfrac{m_{C_1}}{x}= f_{s-m-n,j} \cdots f_{s-n-1,j}$, and $f_{s-n,j}, \ldots, f_{s-1,j}$ denote the edges of $C_{2n+1}\setminus \{y\}$ such that $\dfrac{m_{C_2}}{y}= f_{s-n,j} \cdots f_{s-1,j}$. Therefore,  $\dfrac{m_{B}u_{j}}{xy}=\dfrac{m_{B}}{xy}u_{j} =f_{1,j} \cdots f_{s-1,j}$. Now, by \cite[Theorem 6.7]{AB},  $y_1$ is even-connected to  $y_2$ with respect to  $\displaystyle f_{1,j} \cdots f_{s-1,j}$.   Consequently,  there exists a walk $P: \; p_0,\ldots,p_{2k+1}$ for some $k \geq 1$ in $G$  such that \begin{itemize}
		\item $p_0 =y_1$ and $p_{2k+1}=y_2$.
		\item for all $ 1 \leq l \leq k$, $p_{2l-1}p_{2l} =f_{i,j}$ for some $i$.
		\item for all $i$, $|\{l \geq 1 : p_{2l-1}p_{2l} =f_{i,j}\}| \leq |\{ l : f_{l,j} =f_{i,j}\}|$.
	\end{itemize} If the walk $P: \;p_0,\ldots,p_{2k+1}$  contains a vertex of $W_B$, then there exists $z \in W_B$ such that $p_t =z$ for some  $t \leq 2k+1$.  Assume, without loss of generality, that  $z \in N_G(C_1)$. We claim that $y_1$ and $x$ are even-connected with respect to $f_{1,j} \cdots f_{s-1,j}$.  Among {the} two walks form $z$ to $x$ along edges of cycle $C_1$ one gives an even-connection between $y_1$ and $x$ with respect to  $f_{1,j} \cdots f_{s-1,j}$. Thus, $y_1$ and $x$ are even-connected with respect to $f_{1,j} \cdots f_{s-1,j}$. By \cite[Theorem 6.7]{AB}, $\displaystyle y_1x \in I(G)^s:f_{1,j} \cdots f_{s-1,j}$. Since $\displaystyle I(G)^{s}:u =\left(I(G)^{s}:f_{1,j} \cdots f_{s-1,j}\right):xy$, we have  $y_1 \in I(G)^s:u$ which is a contradiction to the fact that $g=y_1y_2$ is a minimal monomial generator of $I(G)^s:u$.   Therefore,  $V(P) \subset V(H_B)$.  Observe that the walk $P: \;p_0,\ldots,p_{2k+1}$ in $H_B$ satisfies: \begin{itemize}
		\item $p_0 =y_1$ and $p_{2k+1}=y_2$.
		\item for all $ 1 \leq l \leq k$, $p_{2l-1}p_{2l} =f_{i,j}$ for some $1 \leq i \leq t_j$.
		\item for all $1\leq i \leq t_j$, $|\{l \geq 1 : p_{2l-1}p_{2l} =f_{i,j}\}| \leq |\{ l : 1 \leq l \leq t_j \text{ and } f_{l,j} =f_{i,j}\}|$,
	\end{itemize} which implies that $y_1$ and $y_2$ are even-connected in $H_B$ with respect to $f_{1,j} \cdots f_{t_j,j}$. Thus,  by \cite[Theorem 6.7]{AB}, $y_1y_2 \in   I(H_B)^{t_j+1}:f_{1,j} \cdots f_{t_j,j},$ and hence, $I(G)^s:u \subset L_{j}+I(H_B)^{t_j+1}:f_{1,j} \cdots f_{t_j,j}$. 
	
	Conversely, let $m$ be a minimal monomial generator of $I(H_B)^{t_j+1}:f_{1,j}\cdots f_{t_j,j}$.  Since $ \displaystyle \frac{u}{(f_{1,j} \cdots f_{t_j,j})} =m_{B} \frac{u_{j}}{(f_{1,j} \cdots f_{t_j,j})} \in I(G)^{s-t_j-1}$, we have $$\displaystyle mu= m (f_{1,j} \cdots f_{t_j,j}) \frac{u}{(f_{1,j} \cdots f_{t_j,j})} \in I(H_B)^{t_j+1} I(G)^{s-t_j-1} \subset  I(G)^s$$ which further implies that $m \in I(G)^s:u$. Hence, the assertion follows.
  \end{proof}

\begin{lemma}\label{colonLemma}
	Let $G$ be as in Notation \ref{notationClosure} and $u=m_{B}u_{j}$ for some $ 2 \leq j \leq r$ such that $u \notin I(G)^s$. Write $u_{j}=f_{1,j}\cdots f_{s-m-n-1,j}$  such that $f_{1,j},\dots, f_{t_j,j}\in  E(H_B) $ and $f_{t_j+1,j},\dots, f_{s-m-n-1,j}\in E(G) \setminus E(H_B)$ for some $0 \leq t_j\leq s-m-n-1$. Then, $$\displaystyle I_{j-1}:u=I(G)^s:u +(m_{B}u_1,\ldots,m_{B}u_{j-1}):u=L_{j}+I(H_B)^{t_j+1}:f_{1,j} \cdots f_{t_j,j},$$ where $ L_{j}$ is an ideal generated by a subset of variables and $(w  : w \in W_B) \subset L_{j}$. 
\end{lemma}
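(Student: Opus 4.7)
The first equality is immediate: since $I_{j-1} = I(G)^s + (m_Bu_1,\ldots,m_Bu_{j-1})$ and colon ideals distribute over sums in the first argument, one has $I_{j-1}:u = I(G)^s:u + (m_Bu_1,\ldots,m_Bu_{j-1}):u$. A direct monomial computation further gives $(m_Bu_1,\ldots,m_Bu_{j-1}):m_Bu_j = \sum_{i=1}^{j-1}(u_i:u_j)$. My plan is to analyze this sum of colons, combine the result with Lemma \ref{tech2} (which already describes $I(G)^s:u$), and show that together they produce the claimed shape with some ideal $L_j$ generated by variables and satisfying $(w:w \in W_B) \subset L_j$.

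I build the analysis from two ingredients. First, I verify the auxiliary fact $m_B \in I(G)^{m+n}$: since $C_1$ has $2m+1$ vertices, fixing any vertex $z_1 \in V(C_1)$ leaves $2m$ vertices which a perfect matching of $C_1 \setminus \{z_1\}$ covers with $m$ edges, so $m_{C_1}$ is the product of $z_1$ with $m$ edges of $G$, i.e.\ $m_{C_1} \in I(G)^m$; likewise $m_{C_2} \in I(G)^n$, and hence $m_B = m_{C_1}m_{C_2} \in I(G)^{m+n}$. Second, I invoke the Banerjee ordering from Notation \ref{notationClosure}: for each $1 \le i \le j-1$ either
\begin{enumerate}[(a)]
\item $(u_i:u_j) \subset I(G)^{s-m-n}:u_j$, or
\item there exist an index $k \le i-1$ and a variable $x_i$ with $(u_k:u_j) = (x_i)$ and $(u_i:u_j) \subset (x_i)$.
\end{enumerate}
In case (a), any $g \in (u_i:u_j)$ satisfies $gu = m_B(gu_j) \in I(G)^{m+n}\cdot I(G)^{s-m-n} \subset I(G)^s$, so $(u_i:u_j) \subset I(G)^s:u$. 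In case (b), the identity $x_iu_j = yu_k$ (for some variable $y$, obtained from $(u_k:u_j)=(x_i)$) gives $x_iu = y\,m_Bu_k \in (m_Bu_k) \subset I_{j-1}$, so $x_i \in I_{j-1}:u$.

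Letting $V_j$ be the set of variables $x_i$ from case (b), combining the two cases yields
$$\sum_{i=1}^{j-1}(u_i:u_j) \;\subset\; (I(G)^s:u) + (V_j) \;\subset\; I_{j-1}:u.$$
Applying Lemma \ref{tech2} to write $I(G)^s:u = L_j' + I(H_B)^{t_j+1}:f_{1,j}\cdots f_{t_j,j}$ with $L_j'$ an ideal of variables satisfying $(w:w\in W_B)\subset L_j'$, and then setting $L_j := L_j' + (V_j)$, I obtain the claimed equality with an $L_j$ still generated by variables and still containing every $w \in W_B$. The main obstacle I anticipate is case (b): with no a priori structural information on the principal colons $(u_k:u_j)$ beyond the Banerjee ordering, one cannot show the relevant variables already lie in $I_{j-1}:u$; but once that ordering is invoked, the one-line substitution $x_iu_j = yu_k$ places $x_i\cdot u$ visibly inside $(m_Bu_k)\subset I_{j-1}$, so no further work is needed.
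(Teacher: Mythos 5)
Your proof is correct and follows essentially the same route as the paper's: reduce to $\sum_i(u_i:u_j)$, split according to the Banerjee ordering from Notation \ref{notationClosure}, absorb the first case into $I(G)^s:u$ via $m_B\in I(G)^{m+n}$, collect the principal-colon variables into a variable-generated ideal, and finish with Lemma \ref{tech2}. The only cosmetic difference is that you verify $x_i\in I_{j-1}:u$ by the explicit substitution $x_iu_j=yu_k$, whereas the paper simply observes that $(m_Bu_k:u)\subset I_{j-1}:u$ automatically; both are fine.
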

\begin{proof}
 By Notation \ref{notationClosure}, for any $i \leq j-1$, either $(u_i :u_{j}) \subset I(G)^{s-m-n}:u_{j}$ or there exists $k \leq j-1$ such that $(u_k:u_{j})$ is generated by a 
	variable, and $(u_i :u_{j}) \subset (u_k:u_{j})$. Thus, for any $i \leq j-1$,  either $(m_{B}u_i :u)=(u_i : u_{j}) \subset I(G)^{s-m-n}:u_{j}$ or there exists $k \leq j-1$ such that $(m_{B}u_k:u)$ is generated by a
	variable, and $(m_{B}u_i :u) \subset (m_{B}u_k:u)$. Therefore, $(m_{B}u_1,\ldots,m_{B}u_{j-1}):u\subset {L}_{1,j}+I(G)^{s-m-n}:u_{j}$, where $L_{1,j}$ is an ideal generated by a subset of variables. Note that $I(G)^{s-m-n}:u_{j}=m_{B}I(G)^{s-m-n}:u \subset I(G)^s :u$ as $m_{B}  \in I(G)^{m+n}$. Hence, we have $I_{j-1}:u=I(G)^s:u +(m_{B}u_1,\ldots,m_{B}u_{j-1}):u= L_{1,j} +I(G)^s:u$. 
 
 Now, by Lemma \ref{tech2},   $I(G)^s:u  =L_{2,j}+I(H_B)^{t_j+1}:f_{1,j} \cdots f_{t_j,j},$ where $ L_{2,j}$ is an ideal generated by a subset of variables and $(w  : w \in W_B) \subset L_{2,j}$. Take $L_{j} = L_{1,j}+L_{2,j}$. Then, $L_{j} $ is an ideal generated by subsets of variables  and $(w  : w \in W_B) \subset L_{j}$. Hence, the assertion follows.
\end{proof}

We now prove the main result of this section and conclude the article.

\begin{theorem}\label{mainTheorem}
	Let $G$ be a graph as in Notation \ref{notationClosure}. Then  $$\reg\left(\overline{I(G)^s}\right)= \reg\left(I(G)^s\right) \text{ for all } s>m+n+1.$$
\end{theorem}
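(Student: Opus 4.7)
The plan is to carry over the short-exact-sequence filtration strategy of Theorems \ref{gen} and \ref{k+1} to the chain $I_0 \subset I_1 \subset \cdots \subset I_r$ from Notation \ref{notationClosure}, with $I_0 = I(G)^s$ and $I_r = \overline{I(G)^s}$, while matching the resulting upper bound on $\reg(I_r)$ with a common lower bound. Setting $\delta := 2s + \nu(B) + \max\{\reg(I(H_B)),1\} - 2$, I would first observe that Lemma \ref{lower-int-lemma} applied to the bow $B$ yields $\reg(\overline{I(G)^s}) \ge \delta$, while Lemma \ref{new-lower-lem} applied with $T = V(B)$ yields $\reg(I(G)^s) \ge \delta$ (since $d_G(C_1,C_2) \ge 2$ forces $G[V(B)] = B$, so $\nu(G[V(B)]) = \nu(B)$). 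For each $j$ with $m_B u_j \notin I_{j-1}$ the relevant short exact sequence is
\[0 \to (I_{j-1} : m_B u_j)(-2s) \to I_{j-1} \to I_j \to 0,\]
since $\deg(m_B u_j) = (2m+2n+2)+2(s-m-n-1) = 2s$, and Lemma \ref{colonLemma} lets me write $I_{j-1} : m_B u_j = L_j + J_j$, with $J_j := I(H_B)^{t_j+1}:f_{1,j}\cdots f_{t_j,j}$ and $L_j$ generated by a subset of variables containing $W_B$.

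The crux of the argument is a uniform bound $\reg(L_j + J_j) \le \max\{\reg(I(H_B)),1\}$. When $t_j = 0$ one simply has $J_j = I(H_B)$. When $t_j \ge 1$, Banerjee's structure theorem \cite[Theorem 6.7]{AB} identifies $J_j$ with the edge ideal $I(\tilde H_j)$ of the graph $\tilde H_j$ on $V(H_B)$ obtained from $H_B$ by adjoining the pairs even-connected with respect to $f_{1,j}\cdots f_{t_j,j}$, and his inductive key inequality (the engine behind $\reg(I(H_B)^{t+1}) \le \reg(I(H_B)) + 2t$) gives $\reg(I(\tilde H_j)) \le \reg(I(H_B))$. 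Since $L_j$ is generated by variables and $W_B \cap V(H_B) = \emptyset$, quotienting $R$ by $L_j$ reduces $L_j + J_j$ to the edge ideal of the induced subgraph of $\tilde H_j$ on the vertices of $V(H_B)$ not appearing in $L_j$ (monomial generators meeting $L_j$ die, the rest correspond to edges among surviving vertices), whose regularity is at most $\reg(I(\tilde H_j))$ by the standard induced-subgraph monotonicity of edge-ideal regularity \cite[Corollary 4.3]{BHT}; in the boundary case where the reduced ideal vanishes, $\reg(L_j) = 1$ takes over.

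Granted this colon bound, $\reg((I_{j-1} : m_B u_j)(-2s)) \le 2s + \max\{\reg(I(H_B)),1\} \le \delta$ because $\nu(B) \ge 2$. Lemma \ref{reg-lem}(a) then gives $\reg(I_j) \le \max\{\reg(I_{j-1}), \delta - 1\}$, and induction on $j$ yields $\reg(I_r) \le \max\{\reg(I_0), \delta-1\}$. To conclude I would mimic the two-case analysis of Theorem \ref{gen}: if $\reg(I_0) > \delta$, then $\reg((I_{j-1}:m_B u_j)(-2s)) \le \delta < \reg(I_{j-1})$ at each step, so Lemma \ref{reg-lem}(b) forces $\reg(I_j) = \reg(I_{j-1})$ and hence $\reg(I_r) = \reg(I_0)$; otherwise $\reg(I_0) = \delta$ (the only remaining case, since $\reg(I_0) \ge \delta$), and the upper bound $\reg(I_r) \le \delta$ combined with $\reg(I_r) \ge \delta$ from Lemma \ref{lower-int-lemma} gives $\reg(I_r) = \delta = \reg(I_0)$. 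The main obstacle is the middle-paragraph colon bound: it relies on Banerjee's identification of $J_j$ as the edge ideal of $\tilde H_j$, his regularity inequality for such colons, and careful bookkeeping of how quotienting by the variables in $L_j$ corresponds to passing to an induced subgraph of $\tilde H_j$ so that monotonicity of edge-ideal regularity applies.
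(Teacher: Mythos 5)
Your proposal follows essentially the same route as the paper's proof: the same filtration $I_0\subset I_1\subset\cdots\subset I_r$ with the same short exact sequences, the same colon computation via Lemma \ref{colonLemma}, the same key bound $\reg\left(I(H_B)^{t_j+1}:f_{1,j}\cdots f_{t_j,j}\right)\le\reg(I(H_B))$ obtained from the bipartite case of Banerjee-type colon results (the paper cites the proof of \cite[Theorem 1.1 (ii)]{AE20} for exactly this), and the same two-case comparison against $\delta=2s+\nu(B)+\max\{\reg(I(H_B)),1\}-2$ using Lemmas \ref{new-lower-lem} and \ref{lower-int-lemma}. Your middle paragraph simply spells out the bookkeeping for the variables in $L_j$ that the paper leaves implicit.
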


\begin{proof}
Fix $s > m+n+1$. Recall that $I_0=I(G)^s$ and $I_j=I(G)^s+m_{C_1}m_{C_2}(u_1,\ldots,u_j)$ for $1 \leq j \leq r$.	We claim that  $\reg(I_0)=\reg(I_{r})$.  For $ 1\leq j \leq r$, consider the following short exact sequences 
\begin{align}\label{ses2}
  0\longrightarrow {I_{j-1}:m_{C_1}m_{C_2}u_{j}}(-2s)\xrightarrow{\cdot m_{C_1}m_{C_2}u_{j}}  {I_{j-1}}\longrightarrow {I_{j}}\longrightarrow 0.  
\end{align}	
If $m_{C_1}m_{C_2}u_{j} \in I(G)^s$, then $I_{j-1} : m_{C_1}m_{C_2}u_{j} =R$. Therefore, $\reg\left( {I_{j-1}:m_{C_1}m_{C_2}u_{j}}(-2s)\right)=2s.$ Suppose that $m_{C_1}m_{C_2}u_{j} \notin I(G)^s$.	Write $u_{j}=f_{1,j}\cdots f_{s-m-n-1,j}$  such that $f_{1,j},\dots, f_{t_j,j}\in  E(H_B) $ and $f_{t_j+1,j},\dots, f_{s-m-n-1,j}\in E(G) \setminus E(H_B)$ for some $0 \leq t_j\leq s-m-n-1$. Thus, by Lemma \ref{colonLemma}, $I_{j-1}:m_{C_1}m_{C_2}u_{j}=L_{j}+I(H_B)^{t_j+1}:f_{1,j}\cdots f_{t_j,j}$, where $ L_{j}$ is an ideal generated by a subset of variables and $(w  : w \in W) \subset L_{j}$. If  $H_B$ is non-empty, then it follows from the proof of  \cite[Theorem 1.1 (ii)]{AE20} that $\reg\left( I(H_B)^{t_j+1}:f_{1,j} \cdots f_{t_j,j} \right) \leq \reg(I(H_B))$ as $H_B$ is a bipartite graph. Therefore, for each $1 \leq j \leq r$, \begin{align*}
	   & \reg\left( {I_{j-1}:m_{C_1}m_{C_2}u_{j}}(-2s)\right)\\&=
	    \left\{\begin{array}{cc}
	        2s+\reg\left( L_j+I(H_B)^{t_j+1}:f_{1,j} \cdots f_{t_j,j} \right) & \mbox{if } m_{C_1}m_{C_2}u_{j} \notin I(G)^s \\
	     2s    & \; \mbox{if } m_{C_1}m_{C_2}u_{j} \in I(G)^s
	    \end{array}
	    \right.
	  \\&  \leq 2s+\max\{\reg(I(H_B)), 1\}\\ & \leq 2s + \nu(B)+ \max\{\reg(I(H_B)), 1\}-2,
	\end{align*} as $\nu(B) \geq 2$. Suppose that $ \reg\left(I(G)^{s}\right) > 2s +\nu(B)+\max\{\reg(I(H_B)), 1\}-2. $ Then, by recursively  applying  Lemma \ref{reg-lem} on short exact sequences \eqref{ses2}, we get  \begin{align*}
	  \reg\left(I_0\right)=\reg\left(I_{1}\right)=\cdots =\reg\left(I_r\right).  
	\end{align*} 
	
	Next, suppose that $ \reg\left(I(G)^{s}\right) \leq  2s +\nu(B)+\max\{\reg(I(H_B)), 1\}-2. $ Then, by Lemma \ref{new-lower-lem}, that  $ \reg\left(I(G)^{s}\right) =  2s +\nu(B)+\max\{\reg(I(H_B)), 1\}-2. $ By applying Lemma \ref{reg-lem} on short exact sequences \eqref{ses2}, we get
\begin{align*}
    \reg\left(I_r\right) & \leq \max\Big\{ \reg\left(I_{r-1}\right), \reg\left( I_{r-1}:m_{C_1}m_{C_2}u_r (-2s) \right)-1 \Big\} \\ & \leq \max\Big\{ \reg\left(I_{r-2}\right), \reg\left( I_{r-2}:m_{C_1}m_{C_2}u_{r-1} (-2s) \right)-1, \reg\left( I_{r-1}:m_{C_1}m_{C_2}u_r(-2s) \right)-1 \Big\} \\ & \leq \cdots \cdots \cdots \cdots \cdots \cdots (\text{ continuing this process })\\ & \leq \max\Big\{ \reg\left(I_{0}\right), \reg\left( I_{j-1}:m_{C_1}m_{C_2}u_{j}(-2s) \right)-1 \text{ for } 1 \leq j\leq  r \Big\} \\ & \leq 2s+ \nu(B)+\max\{\reg(I(H_B)), 1\} -2.
\end{align*} By Lemma \ref{lower-int-lemma},  $2s+\nu(B)+\max\{\reg(I(H_B)), 1\} -2 \leq \reg\left(\overline{I(G)^{s}}\right)=\reg\left(I_r\right)\leq 2s+\nu(B)+\max\{\reg(I(H_B)), 1\}-2,$ which implies that  $\displaystyle\reg\left(\overline{I(G)^{s}}\right)=\reg\left(I(G)^{s}\right).$   Hence, the assertion follows.
\end{proof}


\bibliographystyle{plain}
\bibliography{refs_reg}
\end{document}